\documentclass[12pt]{amsart}
\usepackage[top=30mm, bottom=30mm, left=30mm, right=30mm]{geometry}
\usepackage{hyperref}
\usepackage{graphicx}
\usepackage{amssymb}
\usepackage{amsmath}
\usepackage{amsthm}
\usepackage{xcolor}

\newtheorem{thm}{Theorem}[section]
\newtheorem{cor}[thm]{Corollary}
\newtheorem{lem}[thm]{Lemma}
\newtheorem{exmp}[thm]{Example}
\newtheorem{prop}[thm]{Proposition}
\newtheorem{rem}[thm]{Remark}
\theoremstyle{definition}
\newtheorem{defn}[thm]{Definition}
\newtheorem{que}{Question}
\numberwithin{equation}{section}

\newcommand{\N}{\mathbb{N}_0}

\newcommand{\R}{\mathbb{R}}

\newcommand{\set}[1]{\left\{#1\right\}}
\newcommand{\eps}{\varepsilon}

\DeclareMathOperator{\supp}{supp}
\DeclareMathOperator{\diam}{diam}

\begin{document}
\title{Shadowing property, weak mixing and regular recurrence}
\author[J. Li]{Jian Li}
\date{\today}
\address[J. Li]{Department of Mathematics, Shantou University, Shantou, Guangdong, 515063, P.R. China}
\email{lijian09@mail.ustc.edu.cn}

\author[P. Oprocha]{Piotr Oprocha}
\address[P.~Oprocha]{AGH University of Science and Technology, Faculty of Applied
Mathematics, al.
Mickiewicza 30, 30-059 Krak\'ow, Poland -- \and --  Institute of Mathematics\\ Polish Academy of Sciences\\
ul. \'Sniadeckich 8, 00-956 Warszawa, Poland} \email{oprocha@agh.edu.pl}

\subjclass[2010]{Primary: 37B05; Secondary: 37C50, 37B40.}
\keywords{shadowing property, pseudo-orbit, sensitivity, topological entropy, weak mixing, specification property}
\date{\today}

\begin{abstract}
We show that a non-wandering dynamical system with the shadowing property is either equicontinuous or has positive entropy
and that in this context uniformly positive entropy is equivalent to weak mixing.
We also show that weak mixing together with the  shadowing property imply the specification property
with a special kind of regularity in tracing (a weaker version of periodic specification property).
This in turn
implies that the set of ergodic measures supported on the closures of orbits of regularly recurrent points 
is dense in the space of all invariant measures
(in particular, invariant measures in such a system form the Poulsen simplex, up to an affine homeomorphism).
\end{abstract}
\maketitle

\section{Introduction}
The notion of shadowing property (also known under the name of pseudo-orbit tracing property)
was introduced in the fundamental works of Anosov and Bowen on hyperbolic aspect of differentiable dynamics.
It was later discovered that this notion can lead to many interesting results
in the study of measure theoretic and topological properties of dynamical systems on compact metric spaces.
These studies are now among classical results of ergodic theory and qualitative theory of
dynamical systems (see~\cite{AH94} and~\cite{Denker}).

In~\cite{M11} Moothathu proved that in every
non-wandering dynamical system with the shadowing property  the set of minimal points is dense and
recently this result was extended in \cite{MO},
by showing that the set of regularly recurrent points is also dense.
If a non-wandering dynamical system with the shadowing property is sensitive,
various types of minimal subsystems are present in the dynamics,
such as sensitive almost 1-1 extension of odometers and minimal subsystems with positive entropy.
In fact, every non-empty open subset contains an extension of the full shift for some power of the action.
It is shown in~\cite{M11} that for a dynamical system with the shadowing property,
a sensitive point from the non-wandering set is an entropy point
and we will show that there is a full shift factor for some power of the action
in every neighborhood of a sensitive point.
As an immediate consequence we obtain a kind of dichotomy:
a non-wandering dynamical system with the shadowing property
is either equicontinuous or has positive entropy.
We also obtain a few equivalent conditions for positive entropy in systems with the shadowing property.
It is known (see~\cite{L11}) that for a continuous map $f\colon [0,1]\to [0,1]$
weak mixing is equivalent to uniformly positive entropy of all orders.
In the present paper we show that the same equivalence is true for non-trivial systems with the shadowing property.

It was first proved by Bowen that if a weakly mixing system  has the shadowing property,
then it satisfies the specification property and if additionally the system is positively expansive,
then it satisfies the periodic specification property (e.g. see~\cite[Proposition~23.20]{Denker}).
We will combine this result with the above mentioned technique from~\cite{MO} to show
that if a dynamical system with the shadowing property is weakly mixing then it has
the specification property with some regularity in tracing.

It is shown in~\cite{S70} that
if a dynamical system satisfies the periodic specification property,
then the set of atomic measures which are uniformly distributed on the periodic orbits
is dense in the set of invariant measures.
This classical result was later extended by various authors in many different settings,
including maps on the unit interval~\cite{FH} where there is no chance for any kind of expansiveness.
It is also known that positively expansive systems with the shadowing property have dense sets of periodic points.
But there are also weakly mixing systems with the shadowing property and without periodic points
(e.g. see Example~\ref{exmp:shadowing-non-period}),
therefore they are not positively expansive.
While there is no possibility to obtain the same result
(e.g. on the structure of the sets of invariant measures) in general setting,
some part of qualitative behavior survives if we drop the positive expansiveness assumption.
To be specific, we show that if a weakly mixing system has the shadowing property,
then the set of ergodic invariant measures supported on the closures of orbits of regularly recurrent points
is dense in the set of invariant measures.
Since extreme points of invariant measure are exactly the ergodic measures,
invariant measures in such a system form the Poulsen simplex, up to an affine homeomorphism.
For more details about the Poulsen simplex and its connections with dynamical systems,
we refer the reader to~\cite{GW97} and~\cite{LOS78}.

\section{Preliminaries}

Throughout this paper, let $\mathbb{N}$, $\N$ and $\mathbb{R}$ denote the set of all positive integers,
non-negative integers and real numbers, respectively.
Let $(X,d)$ be a metric space.
\emph{Open} and \emph{closed balls} of radius $r >0$ centered at a point $x$ in $X$
are denoted by $B(x,r)$ and $\overline{B}(x,r)$, respectively.
A subset $A$ of $X$ is \emph{residual} if it is a dense $G_\delta$ set.

\subsection{Topological dynamics}
A \emph{dynamical system} is a pair $(X,f)$,
where $X$ is a compact metric space with a metric $d$ and $f\colon X\to X$ is a continuous map.

The \emph{orbit} of a point $x\in X$ is the set $Orb(x,f)=\{f^n(x):\ n\in\N\}$.
The set of limit points of the orbit $Orb(x,f)$ is called the \emph{$\omega$-limit set}
of $x$, and is denoted by $\omega(x,f)$.

A subset $D$ of $X$ is \emph{$f$-invariant} (or simply \emph{invariant}) if $f(D)\subset D$.
A non-empty closed invariant subset $D$ of $X$ is \emph{minimal}, if $\overline{Orb(x,f)} = D$ for every $x \in D$.
A point $x \in  X$ is \emph{minimal} if it is contained in some minimal subset of $X$.

A point $x \in X$ is \emph{periodic with least period $n$} if $n$ is the smallest positive integer satisfying $f^n(x) = x$;
\emph{recurrent} if for every neighborhood $U$ of $x$ there exists $k\in\mathbb{N}$ such that $f^k(x)\in U$;
\emph{regularly recurrent} if for every open neighborhood $U$ of $x$,
there exists $k\in\mathbb{N}$ such that $f^{kn}(x)\in U$ for all $n\in\N$.
It is well known that a point $x\in X$ is minimal if and only if
for every open neighborhood $U$ of $x$, there exists $N> 0$ such that $f^k(x)\in U$
for some $k\in [n,n+N]$ and every $n\in\N$.
Note that every periodic point is regularly recurrent, every regularly recurrent point is minimal
and every minimal point is recurrent (but not vice-versa).
Denote by $P(f)$, $R(f)$, $M(f)$ and $RR(f)$, respectively,
the set of all periodic, recurrent, minimal and regularly recurrent points of $f$.
We say that $x \in X$ is a \emph{non-wandering point} if  for every neighborhood $U$ of $x$,
there exists $k\in\mathbb{N}$ such that $f^k(U)\cap U\neq\emptyset$.
The set of all non-wandering points of $f$ is denoted as $\Omega(f)$.
Observe that $\Omega(f)$ is closed and $f$-invariant.
If $\Omega(f) = X$, the system is said to be \emph{non-wandering}.

We say that $f$ is \emph{transitive} if for every pair of non-empty open subsets $U$ and $V$
of $X$ there is $k\in\mathbb{N}$ such that $f^k(U)\cap  V \neq\emptyset$;
\emph{weakly mixing} if $f \times f$ is transitive;
\emph{strongly mixing}  if for every pair of non-empty open sets $U$ and $V$ of $X$ there is an $N > 0$
such that $f^n(U)\cap  V \neq\emptyset$ for all $n \geq N$.
We say that $x\in X$ is a \emph{transitive point} if $\omega(x,f)=X$.
It is well known that a dynamical system $(X,f)$ is transitive if and only if
the set of transitive points is residual in $X$.

A dynamical system $(X,f)$ is \emph{equicontinuous} if for every $\eps>0$, there is $\delta>0$
with the property that $d(x,y)<\delta$ implies $d(f^n(x,),f^n(y))<\eps$
for every $n\in\mathbb{N}$ and every $x,y\in X$.
A point $x\in X$ is \emph{equicontinuous} if for every $\eps>0$, there is $\delta>0$
with the property that $d(x,y)<\delta$ implies $d(f^n(x,),f^n(y))<\eps$
for every $n\in\mathbb{N}$ and every $y\in X$.
By the compactness of $X$, a system $(X,f)$ is equicontinuous if and only if
every point in $X$ is equicontinuous.

A dynamical system $(X,f)$ is \emph{sensitive} if there exists $\delta>0$ such that
for any non-empty open subset $U$ of $X$, we have $\diam (f^n(U)) > \delta$ for some $n \in \mathbb{N}$.
A point $x\in X$ is \emph{sensitive} if it is not equicontinuous, that is
there exists $\delta>0$ such that for any neighborhood $U$ of $x$ there is $n>0$ such that
$\diam ( f^n(U)) > \delta$. Clearly, if $(X,f)$ is sensitive then every point $x\in X$ is sensitive.

Let $(X,f)$ and $(Y,g)$ be two dynamical systems. If there is a continuous surjection
$\pi: X \to Y$ with $\pi\circ f = g\circ \pi$,
then we say that $\pi$ is a \emph{factor map}, the system
$(Y,g)$ is a \emph{factor} of $(X,f)$ or $(X, f)$ is an \emph{extension} of $(Y,g)$.
We say that $(X,f)$ is almost 1-1 extension of $(Y,g)$ if $Z=\{x\in X: \pi^{-1}(\pi(x))=\{x\}\}$
is residual in $X$. If $\pi$ is a homeomorphism, then we say that $\pi$ is a \emph{conjugacy} and
dynamical systems $(X,f)$ and $(Y,g)$ are \emph{conjugate}. Conjugate dynamical systems can
be considered the same from the dynamical point of view.

\subsection{Shifts and Odometers}
For any integer $d\geq 1$, the space $\{0,1,\dotsc,d\}^{\N}$ is a Cantor space with respect to the product topology.
We write elements of $\{0,1,\dotsc,d\}^{\N}$ as $\alpha=a_0a_1a_2\dotsc$.
The shift map $\sigma:\{0,1,\dotsc,d\}^{\N}\to\{0,1,\dotsc,d\}^{\N}$ is defined by
the condition that $\sigma(\alpha)_n=\alpha_{n+1}$ for $n\in\N$.
It is not hard to check that $\sigma$ is a continuous surjection. The dynamical system
$(\{0,1,\dotsc,d\}^{\N},\sigma)$ is called the \emph{full shift}.

Let $s=(s_j)_{j=1}^\infty$ be a sequence of positive integers
such that $s_j$ divides $s_{j+1}$.
Let $X(j)=\{0,1,\dotsc,s_j-1\}$ and
$X_s=\{x\in\prod_{j=1}^\infty X(j): x_{j+1}\equiv x_j \pmod{s_j}\}$.
Let $f:X_s\to X_s$, $x\mapsto y$, where $y_j=x_j+1 \pmod{s_j}$ for each $j=1,2,\dotsc$.
The dynamical system $(X_s,f)$ is called an \emph{odometer} (or an \emph{adding machine})
defined by the sequence $s=(s_j)_{j=1}^\infty$.
Some authors require that the sequence $(s_j)_{j=1}^\infty$ starts with at least $2$ and is strictly increasing,
but lack of such restrictions has no formal consequences other than admitting (as odometers) periodic orbits
including the trivial one.
It is clear that $(X_s,f)$ is minimal and equicontinuous, and every point in $X_s$ is regularly recurrent.
In the further parts of the paper we will refer to the fact (e.g. see~\cite[Theorem 5.1]{D05}) that the orbit closure $\overline{Orb(x,f)}$
of any regularly recurrent point $x\in X$ is an almost 1-1 extension of an odometer.
See the survey~\cite{D05} by Downarowicz for more details on odometers.

\subsection{The shadowing property}
Let $(X,f)$ be a dynamical system. Fix any $\eps>0$
and $\delta>0$. A sequence $\{x_n\}_{n=0}^\infty$ in $X$ is a \emph{$\delta$-pseudo orbit}
if $d(f(x_n),x_{n+1})<\delta$ for all $n=0,1,2,\dotsc$.
A point $x\in X$ is \emph{$\eps$-tracing} a pseudo-orbit $\{x_n\}_{n=0}^\infty$
if $d(f^n(x),x_n)<\eps$ for all $n=0,1,\dotsc$.
We say that the system $(X,f)$ (or the map $f$) has the \emph{shadowing property} (or \emph{pseudo-orbit tracing property})
if for every $\eps>0$ there is $\delta>0$ such that
every $\delta$-pseudo orbit of $f$ is $\eps$-traced by some point in $X$.

If $x,y\in X$ then an $\eps$-chain of length $n>1$ (a finite $\eps$-pseudo orbit)
form $x$ to $y$ is any sequence $x_1,\ldots, x_n$
such that $x_1=x$, $x_n=y$ and $d(f(x_i),x_{i+1})<\eps$ for $i=1,2,\ldots, n-1$.
We say that $(X,f)$ is
\emph{chain mixing} if for every $\eps>0$
there is $M>0$ such that for any two points $x,y\in X$ and any $n\geq M$ there is an $\eps$-chain
of length $n$ from $x$ to $y$.

The following facts highlight strong connections between the non-wandering set and the shadowing property.
\begin{thm}[\cite{AH94,M11}]\label{thm:shad:omega}
If $f$ has the shadowing property, then $f|_{\Omega(f)}$ also has the shadowing property.
\end{thm}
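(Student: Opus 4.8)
The plan is to reduce the statement to finite pseudo orbits, close such a pseudo orbit into a periodic one, and then obtain a tracing point inside an $\omega$-limit set, which automatically lies in $\Omega(f)$. Since $\Omega(f)$ is closed and $f$-invariant, $(\Omega(f),f|_{\Omega(f)})$ is a dynamical system, and a standard compactness argument reduces the claim to the following: for every $\eps>0$ there is $\delta>0$ such that every \emph{finite} $\delta$-pseudo orbit $x_0,\dots,x_N$ with all $x_j\in\Omega(f)$ is $\eps$-traced on $\{0,1,\dots,N\}$ by some point of $\Omega(f)$. Indeed, given an infinite $\delta$-pseudo orbit in $\Omega(f)$ (with $\delta$ provided by the finite statement applied to $\eps/2$), apply the finite statement to each truncation to obtain tracing points $w_N\in\Omega(f)$; a limit point $w$ of the sequence $(w_N)$ lies in $\Omega(f)$ since $\Omega(f)$ is closed, and passing to the limit in the inequalities $d(f^j(w_N),x_j)<\eps/2$ shows that $w$ $\eps$-traces the whole pseudo orbit.

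So fix $\eps>0$, let $\delta$ be a shadowing constant for $f$ on $X$ corresponding to accuracy $\eps/2$, and let $x_0,\dots,x_N$ be a finite $\delta$-pseudo orbit with all $x_j\in\Omega(f)$. Recall that $\Omega(f)\subseteq\mathrm{CR}(f)$, the chain recurrent set; that $\mathrm{CR}(f)$ splits into chain classes (maximal sets of points joined to one another, in both directions, by $\eta$-chains for every $\eta>0$), each of which is closed and $f$-invariant; and that $f$ maps each chain class onto itself. Let $C_j$ be the chain class of $x_j$. Since $f(x_j)\in C_j$ and $d(f(x_j),x_{j+1})<\delta$, the classes $C_j$ and $C_{j+1}$ come within $\delta$ of one another; moreover, by surjectivity of $f$ on $C_{j+1}$, a point of $C_{j+1}$ lying within $\delta$ of $C_j$ has the form $f(u)$ with $u\in C_{j+1}$. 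Joining such ``jumps'' by $\delta$-chains between points of a common chain class, one threads a $\delta$-chain from $x_N$ back through $C_N,C_{N-1},\dots,C_0$ to $x_0$. Splicing the block $x_0,\dots,x_N$ with this return chain and iterating periodically yields a \emph{periodic} $\delta$-pseudo orbit $\zeta=(\zeta_n)_{n\ge0}$ of $f$ on $X$, of some period $P>N$, with $\zeta_j=x_j$ for $0\le j\le N$.

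Now let $z\in X$ trace $\zeta$ to within $\eps/2$, and pick any $q\in\omega(z,f)$; then $q\in\Omega(f)$ because $\omega(z,f)\subseteq\Omega(f)$. Writing $q=\lim_i f^{m_i}(z)$ and passing to a subsequence with $m_i\equiv r\pmod P$ for a fixed $r\in\{0,\dots,P-1\}$, continuity of the maps $f^j$ gives $d(f^j(q),\zeta_{r+j})\le\eps/2$ for all $j\ge0$, i.e.\ the orbit of $q$ traces the pseudo orbit $(\zeta_{r+n})_{n\ge0}$ to within $\eps/2$. Put $w:=f^{P-r}(q)$; this lies in $\Omega(f)$ since $\Omega(f)$ is $f$-invariant, and for $0\le j\le N$ periodicity of $\zeta$ gives $d(f^j(w),\zeta_j)=d(f^j(w),x_j)\le\eps/2<\eps$. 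Thus $w\in\Omega(f)$ $\eps$-traces $x_0,\dots,x_N$, which proves the finite statement and hence the theorem.

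The step I expect to be the real obstacle is the construction of the return chain: one has to verify carefully that consecutive chain classes visited by the pseudo orbit really are bridgeable within $\delta$, and that surjectivity of $f$ on the classes together with their chain transitivity let one land each bridge at a usable point and ultimately reach $x_0$ itself. The remaining ingredients — the folklore facts that $\Omega(f)\subseteq\mathrm{CR}(f)$ and that $f$ maps each chain class onto itself, the moduli of uniform continuity used when splicing chains, and the juggling of the accuracies $\eps$, $\eps/2$, $\delta$ — are routine.
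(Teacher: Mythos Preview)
The paper does not prove Theorem~\ref{thm:shad:omega}; it is quoted from the literature with the citation \cite{AH94,M11} and used as a black box. So there is no ``paper's own proof'' to compare against.

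Your argument is correct and follows one of the standard routes to this result: close a finite $\delta$-pseudo orbit in $\Omega(f)$ into a periodic $\delta$-pseudo orbit in $X$, shadow it, and extract a tracing point from an $\omega$-limit set. The return-chain step you flagged does go through: the two facts you call folklore---$\Omega(f)\subset\mathrm{CR}(f)$ and surjectivity of $f$ on each chain class---are both true on compact metric spaces, and together with chain transitivity of each class they let you bridge from $x_N$ back down through $C_N,\dots,C_0$ to $x_0$ exactly as you describe. Concretely, for each $j$ you use a preimage $u\in C_{j+1}$ of $x_{j+1}$ to make the jump $u\mapsto f(x_j)\in C_j$, since $d(f(u),f(x_j))=d(x_{j+1},f(x_j))<\delta$, and you thread these jumps with $\delta$-chains inside each class; the resulting periodic pseudo orbit need not stay in $\Omega(f)$, but that is irrelevant since you only shadow in $X$. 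The $\omega$-limit extraction and the finite-to-infinite compactness reduction are routine and correctly handled.

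One cosmetic point: when you pass from $q\in\omega(z,f)$ to $w=f^{P-r}(q)$, it is cleaner to note that $\omega(z,f)$ itself is $f$-invariant, so $w\in\omega(z,f)\subset\Omega(f)$ directly, rather than appealing to invariance of $\Omega(f)$.
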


Additionally, in dynamical systems with the shadowing property there are many minimal points.

\begin{thm}[\cite{M11}]\label{thm:nw-dmp}
Let $(X,f)$ be a non-wandering dynamical system with the shadowing property.
 Then the set $M(f)$ of minimal points is dense in $X$.
\end{thm}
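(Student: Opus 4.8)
The plan is to prove the (equivalent) statement that every open ball meets $M(f)$. Fix $x\in X$ and $r>0$; I want to produce a minimal point in $B(x,r)$. Put $\eps=r/3$ and let $\delta\in(0,r/3)$ be the number furnished by the shadowing property for this $\eps$, shrinking $\delta$ further so that, in addition, $U:=B(x,\delta/2)$ has diameter $<\delta$.

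First I would use the non-wandering hypothesis: since $x\in\Omega(f)$, there are $k\in\mathbb{N}$ and $z\in U$ with $f^k(z)\in U$. As $\diam U<\delta$, the $k$-periodic sequence $\xi$ obtained by endlessly repeating the block $z,f(z),\dots,f^{k-1}(z)$ is a $\delta$-pseudo orbit — the only inequality that is not an equality is $d(f(f^{k-1}(z)),z)=d(f^k(z),z)<\delta$ at the junction of consecutive blocks. Shadowing now gives $p\in X$ with $d(f^n(p),\xi_n)<\eps$ for all $n\in\N$, and since $\xi_{nk}=\xi_0=z$ this says precisely that the $f^k$-orbit of $p$ never leaves $\overline{B}(z,\eps)$.

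The key idea is then to work with $f^k$ rather than $f$. The set $\omega(p,f^k)$ is nonempty (compactness), closed and $f^k$-invariant, and by the previous step it is contained in $\overline{B}(z,\eps)$; by a standard application of Zorn's lemma it contains a set $M$ that is minimal for $f^k$. Choosing $m\in M$ we get $d(m,x)\le d(m,z)+d(z,x)<\eps+\delta/2<r$, so $m\in B(x,r)$. To finish, I would invoke the well-known fact that a point minimal for $f^k$ is minimal for $f$: writing $\overline{Orb(m,f)}=\bigcup_{i=0}^{k-1}f^i\bigl(\overline{Orb(m,f^k)}\bigr)$, this is a finite union of $f^k$-minimal sets that $f$ permutes cyclically, and such a union is an $f$-minimal set (the verification is the same syndeticity argument that underlies the equivalence of minimality for $f$ and for $f^k$). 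Hence $m$ is a minimal point in $B(x,r)$, and density follows.

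The one place that needs care is the localization. Extracting a minimal subset directly from $\overline{Orb(p,f)}$ only yields a minimal point somewhere in the $\eps$-neighborhood of the finite orbit segment $z,f(z),\dots,f^{k-1}(z)$, which need not be anywhere near $x$. Passing to $f^k$ — along which the pseudo orbit returns to $z$ at every single step — is exactly what forces the minimal point into a small ball about $z$, hence about $x$; everything else is routine.
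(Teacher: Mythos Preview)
Your argument is correct. Note, however, that the present paper does not supply its own proof of this statement --- it is quoted from \cite{M11} without proof --- so there is nothing here to compare against directly. Your line of attack (use a non-wandering return to manufacture a periodic $\delta$-pseudo-orbit, shadow it, pass to $f^k$ so that the traced orbit stays in a small ball, extract an $f^k$-minimal subset from the $\omega$-limit, and finish via the syndeticity characterization of minimality) is the standard one and matches Moothathu's original argument in spirit. One cosmetic remark: the clause ``shrinking $\delta$ further so that $U=B(x,\delta/2)$ has diameter $<\delta$'' is unnecessary, since $d(f^k(z),z)\le d(f^k(z),x)+d(x,z)<\delta/2+\delta/2=\delta$ already gives the strict inequality you need at the junction.
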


\begin{thm}[\cite{MO}]
 Let $(X,f)$ be a non-wandering dynamical system with the shadowing property.
 Then the set $RR(f)$ of regularly recurrent points is dense in $X$.
\end{thm}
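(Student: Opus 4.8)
The plan is to construct, near any prescribed point of $X$, a regularly recurrent point by hand: using the shadowing property, I would splice together periodic pseudo orbits of longer and longer periods forming a divisibility chain, and then pass to a limit of their tracing points. Fix $x\in X$ and $\eps>0$; set $\eps_n=\eps/2^{n+2}$, and for each $n$ fix a shadowing constant $\delta_n>0$ so that every $\delta_n$-pseudo orbit is $(\eps_n/2)$-traced by some point. To start, since every point is non-wandering, applying the non-wandering condition to a sufficiently small ball about $x$ produces a point $\bar z_1$ with $d(\bar z_1,x)<\eps_1/2$ and some $p_1\in\mathbb{N}$ with $d(f^{p_1}(\bar z_1),\bar z_1)<\delta_1$. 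Then the periodic sequence $\mathcal P_1=(\bar z_1,f(\bar z_1),\dots,f^{p_1-1}(\bar z_1),\bar z_1,\dots)$ is a $\delta_1$-pseudo orbit, and I would let $w_1$ be a point $(\eps_1/2)$-tracing it.

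For the inductive step, suppose I have $w_n$ together with a base point $\bar z_n$ and a period $p_n$ such that $w_n$ $(\eps_n/2)$-traces the periodic $\delta_n$-pseudo orbit $\mathcal P_n$ based at $\bar z_n$. Then the forward orbit of $w_n$ under $f^{p_n}$ stays within $\eps_n/2$ of $\bar z_n$, so $\omega(w_n,f^{p_n})$ is a non-empty compact $f^{p_n}$-invariant subset of $\overline{B}(\bar z_n,\eps_n/2)$; I would pick in it a point $\bar z_{n+1}$ that is recurrent for $f^{p_n}$ (any point of a minimal subset will do). Recurrence then gives $\ell_{n+1}\in\mathbb{N}$ with $d(f^{p_n\ell_{n+1}}(\bar z_{n+1}),\bar z_{n+1})<\delta_{n+1}$, so, setting $p_{n+1}=p_n\ell_{n+1}$, the period-$p_{n+1}$ periodic sequence $\mathcal P_{n+1}$ based at $\bar z_{n+1}$ is a $\delta_{n+1}$-pseudo orbit, and I would let $w_{n+1}$ be a point $(\eps_{n+1}/2)$-tracing it.

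The heart of the argument would be a bookkeeping claim, proved by induction on $m$: for all $n\le m$ and all $j\ge0$ one has $d(f^{p_n j}(w_m),\bar z_n)\le\rho_{n,m}:=\tfrac12\sum_{k=n}^m\eps_k$, and $\rho_{n,m}\le\eps_n$ by the choice of $(\eps_k)$. The case $n=m$ is just the tracing estimate for $w_m$ along $\mathcal P_m$. For the passage from $m$ to $m+1$ with $n\le m$: since $\bar z_{m+1}\in\omega(w_m,f^{p_m})$, it is a limit of iterates $f^{p_m q_k}(w_m)$ with $q_k\to\infty$, and because $p_n\mid p_m$ each such iterate is among the iterates of $w_m$ already controlled at level $n$; letting $k\to\infty$ gives $d(f^{p_n i}(\bar z_{m+1}),\bar z_n)\le\rho_{n,m}$ for all $i\ge0$. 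Since $p_n\mid p_{m+1}$, the pseudo orbit $\mathcal P_{m+1}$ evaluated at any multiple of $p_n$ is one of these iterates of $\bar z_{m+1}$, so the tracing point $w_{m+1}$ satisfies $d(f^{p_n j}(w_{m+1}),\bar z_n)<\eps_{m+1}/2+\rho_{n,m}=\rho_{n,m+1}$, as required. I expect this propagation of cross-scale control to be the main obstacle: one must guarantee that the tracing point of each finer pseudo orbit still closely follows every coarser one along the appropriate arithmetic progression, and it is precisely the combination of the divisibility $p_n\mid p_m$ with the choice of $\bar z_{m+1}$ inside $\omega(w_m,f^{p_m})$ that delivers this.

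To conclude, the estimates $d(w_m,\bar z_m)<\eps_m/2$, $d(\bar z_{m+1},\bar z_m)\le\eps_m/2$ and $d(w_{m+1},\bar z_{m+1})<\eps_{m+1}/2$ give $d(w_{m+1},w_m)<\eps_m+\eps_{m+1}/2$, so $(w_m)$ is a Cauchy sequence; let $w=\lim_m w_m$. Summing these bounds together with $d(w_1,x)<\eps_1$ shows $d(w,x)<\eps$. For regular recurrence, given $\eps'>0$ choose $n$ with $2\eps_n<\eps'$ and set $N=p_n\in\mathbb{N}$; letting $m\to\infty$ in $d(f^{p_n j}(w_m),\bar z_n)\le\eps_n$ and in $d(w_m,\bar z_n)\le\eps_n$ yields $d(f^{Nj}(w),w)\le2\eps_n<\eps'$ for every $j\ge0$, which is exactly the definition of $w$ being regularly recurrent. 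As $x\in X$ and $\eps>0$ were arbitrary, $RR(f)$ is dense in $X$.
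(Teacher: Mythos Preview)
Your argument is correct, and it is essentially the approach of Moothathu--Oprocha \cite{MO} that the paper cites for this theorem (and reproduces in spirit in the proof of Theorem~\ref{thm:spec}): build a divisibility chain of periods $p_n\mid p_{n+1}$, at each step pick a minimal point in the $\omega$-limit set of the previous tracing point under $f^{p_n}$ to serve as the new base, form a periodic pseudo orbit of the new period and trace it, and show the limit of the tracing points is regularly recurrent. Your cross-scale estimate $d(f^{p_n j}(w_m),\bar z_n)\le\rho_{n,m}$ plays exactly the role of conditions \eqref{spec:rr:c1}--\eqref{spec:rr:c3} in the paper's proof of Theorem~\ref{thm:spec}.
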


The following fact can be easily deduced from known results.
We present a proof here for completeness.

\begin{prop} \label{prop:eq-RR}
Let $(X,f)$ be a non-wandering system. If $(X,f)$ is equicontinuous then $M(f)=X$
(i.e. $X$ is a union of minimal equicontinuous systems).
If additionally $(X,f)$ has the shadowing property then $RR(f)=X$
(and each minimal subsystem is conjugated to an odometer).
\end{prop}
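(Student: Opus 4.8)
The plan is to replace the given metric by a compatible metric $D$ that makes $f$ nonexpansive --- the usual choice $D(x,y)=\sup_{n\ge 0}d(f^n x,f^n y)$ works, since equicontinuity guarantees that $D$ induces the original topology and clearly $D(f(x),f(y))\le D(x,y)$. Because $(X,f)$ is non-wandering, $f$ must be onto: a point $y\notin f(X)$ would have a neighbourhood disjoint from the closed set $f(X)\supseteq f^k(U)$ for every $k\ge 1$, contradicting non-wandering. A surjective nonexpansive self-map of a compact metric space is an isometry, so $f$ is in fact an isometric homeomorphism of $(X,D)$.

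For the first assertion I would pass to the enveloping semigroup. By Arzel\`a--Ascoli the closure $E$ of $\{f^n:n\ge 0\}$ in the uniform topology is a compact subsemigroup of the group of isometries of $(X,D)$; in particular every element of $E$ is a homeomorphism. For any $g\in E$ the compact semigroup $\overline{\{g^n:n\ge1\}}$ contains an idempotent, which being a bijection must be $\mathrm{id}_X$; feeding this back gives $g^{-1}\in E$, so $E$ is a compact topological group. Since $\overline{Orb(x,f)}=\{g(x):g\in E\}$ and $Eg_0=E$ for every $g_0\in E$, the orbit closure of any point $y=g_0(x)\in\overline{Orb(x,f)}$ equals $\overline{Orb(x,f)}$; hence $\overline{Orb(x,f)}$ is minimal, and it is equicontinuous as a subsystem of an equicontinuous system. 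Thus $M(f)=X$.

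For the second assertion, assume moreover that $(X,f)$ has the shadowing property, and fix $x\in X$ and $\eps>0$; let $\delta>0$ witness the shadowing property for $\eps$ with respect to $D$. By the first part $x$ is (regularly, in particular) recurrent, so there is $k\ge 1$ with $D(f^k x,x)<\delta$, and then the $k$-periodic sequence obtained by repeating the block $x,f(x),\dots,f^{k-1}(x)$ is a $\delta$-pseudo-orbit. Some $z\in X$ $\eps$-traces it; reading off the coordinates $n=0$ and $n=kj$ yields $D(z,x)<\eps$ and $D(f^{kj}z,x)<\eps$ for every $j\ge 0$, whence $D(f^{kj}x,x)\le D(f^{kj}x,f^{kj}z)+D(f^{kj}z,x)<2\eps$ for all $j$ because $f$ is nonexpansive. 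As $\eps>0$ is arbitrary and $D$ induces the topology, $x\in RR(f)$, so $RR(f)=X$. Finally, each minimal set $Y\subseteq X$ is the orbit closure of one of its (now regularly recurrent) points, hence by the cited structure theorem it is an almost $1$--$1$ extension, via some factor map $\pi$, of an odometer; since $Y$ is equicontinuous, $f|_Y$ is a surjective isometry for a suitable metric, so $y\mapsto\diam\pi^{-1}(\pi(y))$ is lower semicontinuous and non-decreasing along $f|_Y$-orbits, hence constant by minimality of $Y$, and since it vanishes on the residual set where $\pi$ is injective it vanishes everywhere. Thus $\pi$ is a conjugacy and $Y$ is an odometer.

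The step I expect to require the most care is the first assertion: upgrading ``every point is recurrent'' (which is immediate from non-wandering plus nonexpansiveness) to ``every point is minimal'' genuinely uses the group structure of $E$, and one must not forget to check that $f$ is onto, for otherwise $E$ need not consist of homeomorphisms. The shadowing half is a single pseudo-orbit construction; its only slightly delicate point is the (standard) fact that an equicontinuous almost $1$--$1$ extension of a minimal system is automatically a conjugacy.
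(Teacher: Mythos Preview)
Your proof is correct in substance, but it follows a genuinely different route from the paper's.

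\textbf{How the paper argues.} The paper proceeds entirely by citation: for the first assertion it appeals to \cite{AAB93} to get that every point in an equicontinuous non-wandering system has a minimal equicontinuous orbit closure. For the second, it invokes the characterization from \cite{M11} that an equicontinuous surjection has the shadowing property iff the space is totally disconnected; applied to $(X,f)$, this makes $X$ (and hence each minimal subsystem) totally disconnected, and then \cite{Akin2,MY02} identify each minimal subsystem as an odometer, whence every point is regularly recurrent.

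\textbf{How your argument compares.} You instead build an invariant metric $D$, upgrade $f$ to a surjective isometry, and use the Ellis--Arzel\`a--Ascoli compact-group structure of $E=\overline{\{f^n\}}$ to obtain minimality of every orbit closure directly. For regular recurrence you bypass total disconnectedness entirely: tracing the $k$-periodic pseudo-orbit plus nonexpansiveness gives $D(f^{kj}x,x)<2\eps$ in one line. The odometer identification then comes from the $RR$ structure theorem combined with the fact that an equicontinuous almost $1$--$1$ extension of a minimal system is a conjugacy. Your approach is more self-contained and exposes the mechanism (the invariant metric and the periodic pseudo-orbit), while the paper's is shorter but rests on three external results; yours also never needs to know that $X$ is totally disconnected.

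\textbf{One small correction.} The fiber-diameter function $y\mapsto\diam\pi^{-1}(\pi(y))$ is \emph{upper} semicontinuous, not lower (preimages under a continuous map vary upper semicontinuously). This does not damage the argument: since $f$ and the odometer map are homeomorphisms and $f$ is an isometry, the function is actually constant along orbits, and an upper semicontinuous function constant along a dense orbit in a minimal system is globally constant---hence identically zero because it vanishes on the residual injectivity set. You may want to rephrase that sentence accordingly.
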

\begin{proof}
Fix a point $x\in X$. Then it is not hard to see that $x$ is a recurrent point (see \cite{AAB93}).
This implies (e.g. again by \cite{AAB93}) that $(\overline{Orb(x,f)},f)$ is an equicontinuous minimal subsystem of $(X,f)$.
By \cite{M11} any equicontinuous surjective dynamical $(X,f)$ has the shadowing property if and only if $X$ is totally
disconnected. Then $(\overline{Orb(x,f)},f)$ is either a finite set (a periodic orbit)
or a Cantor set, and then it is conjugate to an odometer (see \cite{Akin2} or \cite{MY02}).
In each case $x$ is regularly recurrent.
\end{proof}

\section{Dichotomy results for dynamical systems with the shadowing property}

It is shown in~\cite{M11} that for a dynamical system with the shadowing property,
a sensitive point from the non-wandering set is an entropy point (see~\cite{YZ07} for the definition).
In fact, we can show that there is a full shift factor in every neighborhood of a sensitive point.
\begin{prop}\label{prop:sensitive-point}
Let $(X,f)$ be a dynamical system with the shadowing property.
If $u\in X$ is a sensitive point of $(\Omega(f),f)$,
then for every neighborhood $U$ of $u$, there exists a positive integer $m$,
a subsystem $(Y,f^m)$ of $(X,f^m)$ with $Y\subset U$
and a factor map $\pi:(Y,f^m)\to (\{0,1\}^{\N},\sigma)$.
\end{prop}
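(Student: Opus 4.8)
The plan is to use the sensitivity of $u$ in $(\Omega(f),f)$ to build, inside the neighborhood $U$, two compact sets that are "distinguishable" under some iterate $f^m$, and then to exploit the shadowing property to realize every $0$--$1$ sequence as the itinerary of some tracing point. First I would fix the sensitivity constant $c>0$ for $(\Omega(f),f)$ and shrink $U$ if necessary so that $\diam(U)$ is small; by Theorem~\ref{thm:shad:omega} the restriction $f|_{\Omega(f)}$ still has the shadowing property, so I may as well work inside $\Omega(f)$ and assume $X=\Omega(f)$. Using sensitivity at $u$, in every neighborhood of $u$ there is a pair of points whose orbits get $c$-separated; I would use this, together with non-wandering/recurrence-type arguments coming from Theorem~\ref{thm:nw-dmp} (density of minimal points), to locate two points $a,b$ together with a time $m$ such that $d(f^m(a),f^m(b))>c$ while $a,b$ and the relevant initial segments of their orbits all stay in $U$. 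The integer $m$ will be the power of the action in the statement.

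Next I would set up the symbolic coding. Let $\eps = c/4$ (say) and let $\delta>0$ be the shadowing constant for $f$ corresponding to $\eps$; by uniform continuity of $f,f^2,\dots,f^{m}$ I can also assume $\delta$ is small enough that $\delta$-closeness is preserved up to controllable error under these finitely many iterates, and that the two "return loops'' at $a$ and at $b$ (orbit segments of length $m$ that start near $u$, pass near $f^m(a)$ resp.\ $f^m(b)$, and come back $\delta$-close to the starting region) exist --- here I would again invoke density of minimal/recurrent points in $\Omega(f)$ near $u$ to close up these segments into $\delta$-pseudo-orbits. Given any $\omega=\omega_0\omega_1\omega_2\cdots\in\{0,1\}^{\N}$, concatenate these two blocks of length $m$ according to $\omega$ to produce a $\delta$-pseudo-orbit of $f$; shadowing gives a point $x_\omega$ that $\eps$-traces it. Because the $0$-block and the $1$-block force positions that are $>c=4\eps$ apart at the relevant coordinate, the value $\omega_k$ is recovered from the location of $f^{mk}(x_\omega)$, so the map $\omega\mapsto x_\omega$ (after passing to the closure $Y$ of the set of all such tracing points, intersected with the iterates needed to stay in $U$) descends to a well-defined continuous surjection $\pi:(Y,f^m)\to(\{0,1\}^{\N},\sigma)$ satisfying $\pi\circ f^m=\sigma\circ\pi$. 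One takes $Y$ to be a suitable closed $f^m$-invariant subset of $U$; shrinking the blocks' "footprint'' keeps $Y\subset U$.

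The main obstacle is the bookkeeping that forces $Y\subset U$ while simultaneously keeping enough room for the two $c$-separated branches: the tracing point only stays $\eps$-close to the pseudo-orbit, and iterating $f$ up to $m$ times inflates errors, so I must choose $U$ small, then $\eps$ small relative to the sensitivity constant, then $\delta$ small relative to $\eps$ and to the moduli of continuity of $f,\dots,f^m$, and only then select $a$, $b$, $m$ and the two closing loops --- all inside the originally given $U$. The other delicate point is ensuring the two length-$m$ blocks can be concatenated in \emph{any} order, i.e.\ that each block both starts and ends in the same small region near $u$; this is exactly where the non-wandering hypothesis (via Theorem~\ref{thm:shad:omega} and density of minimal points) is used, since it lets me return $\delta$-close to the start. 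Continuity and surjectivity of $\pi$ are then routine, and the factor relation $\pi\circ f^m=\sigma\circ\pi$ holds by construction.
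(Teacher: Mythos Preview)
Your overall strategy matches the paper's proof, but there is a concrete timing inconsistency in the sketch. You want blocks of length $m$ that both start and end $\delta$-close to $u$ (so that they can be freely concatenated and so that $Y\subset U$), yet you also place the separation $d(f^m(a),f^m(b))>c$ at the \emph{same} time $m$. If $\diam U<c$ these requirements are incompatible: the two block endpoints would be more than $c$ apart and hence cannot both lie near $u$. The paper resolves this by decoupling the two times: sensitivity is used to find $w_1,w_2\in B(u,\delta/2)$ and a time $r$ with $d(f^r(w_1),f^r(w_2))>\lambda$, and only \emph{afterwards} recurrence yields $m>r$ with $f^m(w_i)$ close to $w_i$. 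The blocks have length $m$, start and end near $u$, but are distinguished at the intermediate coordinate $r$; accordingly $Y_\alpha$ is defined by the conditions $f^{mi}(x)\in\overline{B}(u,2\eps)$ and $f^{mi+r}(x)\in W_{a_i}$, not by the position at multiples of $m$ alone.

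A second, related gap: to obtain two blocks of the \emph{same} length you need $w_1$ and $w_2$ to return close to themselves at a common time. Density of minimal points in $\Omega(f)$ does not provide this. The paper instead passes to the product $(\Omega(f)\times\Omega(f),f\times f)$, chooses a minimal point $(w_1,w_2)$ there (using that products of maps with dense minimal points again have dense minimal points), and uses its recurrence to get a single $m$ that works for both coordinates simultaneously. Your appeal to ``density of minimal/recurrent points in $\Omega(f)$ near $u$'' leaves this joint-recurrence step unaccounted for.
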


\begin{proof}
Since $u$ is a sensitive point, there is a positive number $\lambda$ with the property that
for any neighborhood $V$ of $u$, we have $\diam ( f^n(V\cap \Omega(f))) > \lambda$ for some $n \in \mathbb{N}$.
Pick a positive number $\eps<\lambda/8$ such that $B(u,3\eps)\subset U$.
By Theorem~\ref{thm:shad:omega} $(\Omega(f),f)$ also has the shadowing property.
Let $\delta$ be a constant provided for $\eps>0$ by the shadowing property of $f|_{\Omega(f)}$.
By the definition of $\lambda$,
there exists a positive integer $r$ and two points $v_1,v_2\in \Omega(f)\cap B(u,\delta/4)$
such that $d(f^r(v_1),f^r(v_2))>\lambda$.
By Theorem~\ref{thm:nw-dmp} $(\Omega(f),f)$ has a dense set of minimal points,
the same is true for $(\Omega(f)\times \Omega(f),f\times f)$.
By the continuity of $f^r$, there exists a minimal point $(w_1,w_2)$ of $(\Omega(f)\times \Omega(f),f\times f)$
such that $d(v_1,w_1)<\delta/4$,  $d(v_2,w_2)<\delta/4$ and
$d(f^r(v_1),f^r(w_1))<\eps$, $d(f^r(v_2),f^r(w_2))<\epsilon$.
Then $d(w_1,w_2)<\delta$ and $d(f^r(w_1),f^r(w_2))>\lambda-2\epsilon>4\eps$.
By the recurrence of $(w_1,w_2)$ there exists a positive integer $m>r$ such that
$d(w_1,f^m(w_1))<\delta/4$ and $d(w_2,f^m(w_2))<\delta/4$.
Define two finite sequences as follows
\begin{eqnarray*}
\eta(0)&=&(w_1,f(w_1),\dotsc,f^{m-1}(w_1)),\\
\eta(1)&=&(w_2,f(w_2),\dotsc,f^{m-1}(w_2)).
\end{eqnarray*}
Let $W_0=\overline{B}(f^r(w_1),\eps)$ and $W_1=\overline{B}(f^r(w_2),\eps)$.
Then $W_0$ and $W_1$ are  non-empty closed subsets, and $dist(W_0,W_1)>\epsilon$.

For every $\alpha=a_0a_1\dotsc a_n\dotsc\in\{0,1\}^{\N}$,
we set
\[Y_\alpha=\{x\in \overline{B}(u,2\eps): f^{mi}(x)\in\overline{B}(u,2\eps) \text{ and }
f^{mi+r}(x)\in W_{a_i}\text{ for }i=0,1,\dotsc\}.\]
It is clear that every $Y_\alpha$ is a closed subset of $X$, and $Y_\alpha\cap Y_\beta=\emptyset$ for $\alpha\neq\beta$.
We first show that every $Y_\alpha$ is not empty.
Let $y_\alpha$ be a point $\eps$-tracing the $\delta$-pseudo orbit $\eta(a_0)\eta(a_1)\dotsb\eta(a_n)\dotsb$.
Then it is easy to verify that $y_\alpha\in Y_\alpha$.

Let $Y=\bigcup_{\alpha\in\{0,1\}^{\N}}Y_\alpha$. Then $Y\subset U$.
We show that $(Y,f^m)$ is  a subsystem of $(X,f^m)$.
Let $\{y_n\}$ be a sequence in $Y$ and $y_n\to y$ as $n\to\infty$.
For every $y_n$, there exists $\alpha_n$ such that $y_n\in Y_{\alpha_n}$.
By the compactness of $\{0,1\}^{\N}$, without loss of generality we assume that
$\alpha_n\to \alpha\in\{0,1\}^{\N}$. Then $y\in Y_\alpha$, which implies that $Y$ is closed.
Now we show that $Y$ is $f^m$-invariant.
Let $y\in Y$. Then there exists an
$\alpha=a_0a_1\dotsc a_n\dotsc\in\{0,1\}^{\N}$ such that $y\in Y_\alpha$.
By the definition of $Y_\alpha$, $f^{mi}(y)\in\overline{B}(u,2\eps)$ and $f^{mi+r}(y)\in W_{a_i}$ for $i=0,1,\dotsc$.
Then $f^{mi}(f^m(y))\in\overline{B}(u,2\eps)$ and $f^{mi+r}(f^m(y))\in W_{a_{i}+1}$ for $i=0,1,\dotsc$,
that is $f^m(y)\in Y_{\sigma(\alpha)}\subset Y$.

Now define a map $\pi:Y\to \{0,1\}^{\N}$ by $\pi(Y_\alpha)=\{\alpha\}$ for every $\alpha\in\{0,1\}^{\N}$.
Then $\pi$ is a factor map between $(Y,f^m)$ and $(\{0,1\}^{\N},\sigma)$.
\end{proof}

\begin{rem}
Consider the map $f\colon [0,1]\to [0,1]$, $x \mapsto x^2$. It is well known that if an interval map has
fixed points only at endpoints then it has the shadowing property (e.g. see \cite[Lemma~4.1]{CL}).
Then $([0,1],f)$ has the shadowing property.
It is clear that $1$ is a sensitive point, but $\Omega(f)=\{0,1\}$,
and therefore we cannot apply Proposition~\ref{prop:sensitive-point} to $f$ in this case.
\end{rem}

Let $h(X,f)$ denote the topological entropy of $(X,f)$.
We refer the reader to the textbooks~\cite{Denker} or \cite{W82} for basic properties of topological entropy.
By Proposition~\ref{prop:sensitive-point}, we have the following dichotomy on non-wandering systems with the shadowing property.
Note that this result is also essentially contained  in \cite[Corollary 5(i)]{M11}.

\begin{thm}\label{thm:non-wandering}
Let $(X,f)$ be a non-wandering dynamical system with the shadowing property.
Then either $(X,f)$ is equicontinuous or $(X,f)$ has positive entropy.
\end{thm}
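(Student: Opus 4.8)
The plan is to prove the dichotomy directly, in the form ``not equicontinuous $\Rightarrow$ positive entropy.'' Assume $(X,f)$ is not equicontinuous. Since the system is non-wandering, $\Omega(f)=X$, so $(\Omega(f),f)$ and $(X,f)$ are literally the same system. Recall from Section~2 that $(X,f)$ is equicontinuous if and only if every point of $X$ is an equicontinuity point; hence failure of equicontinuity produces a point $u\in X=\Omega(f)$ that is not an equicontinuity point, i.e.\ a sensitive point of $(\Omega(f),f)$ in the sense of the definitions given earlier. This step is the only place where one should be a little careful to state explicitly that sensitivity in $(X,f)$ and in $(\Omega(f),f)$ coincide here, precisely because $\Omega(f)=X$; it costs nothing but makes Proposition~\ref{prop:sensitive-point} applicable verbatim.

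Next I would invoke Proposition~\ref{prop:sensitive-point} with the neighbourhood $U=X$ (any neighbourhood of $u$ would do). It furnishes a positive integer $m$, a closed $f^m$-invariant set $Y\subset X$, and a factor map $\pi\colon (Y,f^m)\to(\{0,1\}^{\N},\sigma)$ onto the full two-shift. This is the substantive ingredient, and since it has already been established, no new difficulty arises at this point.

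It then remains to carry out the standard entropy bookkeeping. The full shift on two symbols has $h(\{0,1\}^{\N},\sigma)=\log 2$. Since topological entropy does not increase under factor maps, $h(Y,f^m)\geq\log 2$; since $Y$ is a subsystem of $(X,f^m)$, monotonicity of entropy under restriction to closed invariant subsets gives $h(X,f^m)\geq h(Y,f^m)\geq\log 2$; finally the power rule $h(X,f^m)=m\cdot h(X,f)$ yields $h(X,f)\geq(\log 2)/m>0$. All three facts are classical and recorded in~\cite{Denker} and~\cite{W82}.

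I do not anticipate any genuine obstacle: essentially all the work has been pushed into Proposition~\ref{prop:sensitive-point}, and the remainder uses only elementary properties of topological entropy. If desired, one can also remark that the two alternatives in the statement are mutually exclusive for non-trivial systems, since an equicontinuous map has zero topological entropy, but exclusivity is not required for the theorem as stated.
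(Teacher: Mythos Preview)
Your proof is correct and follows essentially the same route as the paper: failure of equicontinuity yields a sensitive point in $X=\Omega(f)$, Proposition~\ref{prop:sensitive-point} produces a full-shift factor of some $(Y,f^m)$, and the standard entropy inequalities give $h(X,f)\geq (\log 2)/m>0$. The only difference is presentational---you spell out the entropy bookkeeping in slightly more detail than the paper does.
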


\begin{proof}
If  $(X,f)$ is not equicontinuous, then there exists a sensitive point in $X$.
Since $(X,f)$ is non-wandering, $\Omega(f)=X$. Then
by Proposition~\ref{prop:sensitive-point}, there exists a positive integer $m$,
a subsystem $(Y,f^m)$ of $(X,f^m)$  and a factor map $\pi:(Y,f^m)\to (\{0,1\}^{\N},\sigma)$.
Therefore, $h(X,f)=\frac{1}{m} h(X,f^m)\geq \frac{1}{m} h(\{0,1\}^{\N},\sigma)>0$.
\end{proof}

\begin{lem}\label{lem:factor_Rf}
Let $(X,f)$ be a dynamical system.
If there exists a positive integer $m$ and a subsystem $(Y,f^m)$ of $(X,f^m)$
such that $(Y,f^m)$ is an extension of $(\{0,1\}^{\N},\sigma)$,
then $(\Omega(f)\setminus R(f))\cap Y\neq \emptyset$, $(R(f)\setminus M(f))\cap Y\neq \emptyset$ and
 $(M(f)\setminus RR(f))\cap Y\neq \emptyset$.
\end{lem}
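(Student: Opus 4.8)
The plan is to realize each of the three required points as a tracing point of a carefully designed pseudo-orbit inside $Y$. Since $(Y,f^m)$ factors onto the full shift $(\{0,1\}^{\N},\sigma)$ via some map $\pi$, we may pull back symbolic behavior into $Y$. First I would fix, for each symbol $i\in\{0,1\}$, a point $z_i\in Y$ with $\pi(z_i)$ the constant sequence $i^\infty$; then $f^m(z_i)=z_i$ under the factor at the symbolic level, but of course $z_i$ need not be $f^m$-periodic in $Y$. The key auxiliary observation is that $f|_{\Omega(f)}$ has the shadowing property by Theorem~\ref{thm:shad:omega}, and one checks that $Y\subset\Omega(f)$ (points of $Y$ return near themselves because the symbolic factor does), so shadowing is available for pseudo-orbits built from pieces of the orbits of $z_0$ and $z_1$ under $f$, concatenated according to prescribed $\{0,1\}$-patterns of $f^m$-blocks.

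For $(\Omega(f)\setminus R(f))\cap Y$: I would build a $\delta$-pseudo orbit that follows $z_0$ for a long time, then switches to $z_1$, then to $z_0$ for a much longer time, then to $z_1$, with block lengths growing so fast that the itinerary is not recurrent; any point $x$ that $\eps$-traces it lies in $Y$ (by the defining closed conditions, taking $\eps$ small relative to the distance between $\pi^{-1}(\text{cylinder }[0])$ and $\pi^{-1}(\text{cylinder }[1])$), lies in $\Omega(f)$, but its orbit never returns close to $x$, so $x\notin R(f)$. For $(R(f)\setminus M(f))\cap Y$: I would take the itinerary to be a point of $\{0,1\}^{\N}$ that is recurrent for $\sigma$ but not minimal — e.g. a suitable Toeplitz-like or Sturmian-type sequence is overkill; simpler, take $\alpha$ which is recurrent but whose orbit closure is not minimal, such as a sequence built by concatenating all finite words in a recurrent-but-not-minimal fashion. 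Trace it; the traced point is recurrent (its itinerary is recurrent and the cylinder sets separate the two symbols at scale $\eps$), but not minimal, since its orbit closure surjects onto $\overline{\mathrm{Orb}(\alpha,\sigma)}$ which is not minimal. For $(M(f)\setminus RR(f))\cap Y$: I would instead choose $\alpha\in\{0,1\}^{\N}$ minimal for $\sigma$ but not regularly recurrent — for instance a Sturmian sequence, whose orbit closure is minimal but contains no periodic (hence no regularly recurrent under the odometer-factor criterion... ) — more carefully, a minimal non-Toeplitz sequence; trace it and argue the traced point $x$ is minimal while $\omega(x,f)\cap Y$ maps onto the non-Toeplitz minimal set, so $x$ cannot be regularly recurrent, since the orbit closure of a regularly recurrent point is an almost $1$-$1$ extension of an odometer and hence any factor that is minimal must be an almost $1$-$1$ extension of an odometer too.

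The main obstacle I anticipate is the last step: transferring the property ``not regularly recurrent'' across the factor map $\pi$. Recurrence, minimality, and non-wandering all push forward cleanly under a factor map, but regular recurrence does \emph{not} pull back automatically, so I must argue in the reverse direction — that if the traced point $x$ were regularly recurrent, then $(\overline{\mathrm{Orb}(x,f)},f)$ would be an almost $1$-$1$ extension of an odometer (the fact cited from \cite{D05}), its factor $(\overline{\mathrm{Orb}(\pi(x),\sigma)},\sigma)$ would then be minimal and equicontinuous-by-an-almost-$1$-$1$-extension, forcing $\pi(x)$ to be a Toeplitz sequence, contradicting the choice of a minimal non-Toeplitz itinerary. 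Likewise, care is needed to ensure the traced point actually lands in $Y$ and not merely near it: I would choose $\eps$ smaller than one third of $\mathrm{dist}\bigl(\pi^{-1}([0]),\pi^{-1}([1])\bigr)$ and small enough that $\eps$-tracing of a pseudo-orbit respecting an itinerary $\alpha$ forces membership in $\pi^{-1}(\alpha)\subset Y$, using that $Y$ is closed and $f^m$-invariant and that the symbol is determined with slack $\eps$ at each time $mi$. Once these transfer lemmas are in place, the three conclusions follow by exhibiting the three itineraries and invoking the shadowing property of $f|_{\Omega(f)}$.
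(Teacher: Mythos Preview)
Your proposal has a fundamental gap: the lemma is stated for an arbitrary dynamical system $(X,f)$ and does \emph{not} assume the shadowing property. You invoke Theorem~\ref{thm:shad:omega} to get shadowing on $\Omega(f)$, but that theorem has the shadowing property of $f$ as a hypothesis, which is absent here. Without shadowing there is no mechanism for producing a point that traces a prescribed pseudo-orbit, so the entire architecture of your argument collapses. Even in the context of the paper (where the lemma is applied after Proposition~\ref{prop:sensitive-point}), the lemma is stated and proved in full generality, and its later use in Theorem~\ref{thm:shadowing-positive-entropy} also relies on the implication being valid without assuming shadowing at that step.

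There is a second, related gap: even if shadowing were available on $\Omega(f)$, a point that $\eps$-traces a pseudo-orbit built from orbit segments in $Y$ need not lie in $Y$. You appeal to ``defining closed conditions'' and to choosing $\eps$ small relative to $\dist\bigl(\pi^{-1}([0]),\pi^{-1}([1])\bigr)$, but $Y$ is an \emph{abstract} closed $f^m$-invariant set equipped with some factor map $\pi$; it is not assumed to be cut out by itinerary constraints as in Proposition~\ref{prop:sensitive-point}. Being $\eps$-close to $Y$ at each time $mi$ does not force membership in $Y$.

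The paper's proof avoids both issues by working purely with the factor map. Using Zorn's lemma one passes to a closed $f^m$-invariant $\Lambda\subset Y$ that is minimal among those mapping onto the full shift; one then shows $(\Lambda,f^m)$ is transitive. A transitive point of $\Lambda$ is recurrent but not minimal (since the image system is not minimal), giving $(R(f)\setminus M(f))\cap Y\neq\emptyset$. The preimage in $\Lambda$ of the non-recurrent point $100\ldots$ lies in $\Omega(f)\setminus R(f)$. For the last case one takes a weakly mixing minimal subshift $Z\subset\{0,1\}^{\N}$ (e.g.\ Chac\'on) and any minimal point in $\pi^{-1}(Z)$; if it were regularly recurrent its image would be regularly recurrent in $Z$, impossible since $Z$ is weakly mixing and infinite. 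No tracing is needed anywhere.
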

\begin{proof}
Let $\pi:(Y,f^m)\to (\{0,1\}^{\N},\sigma)$ be the factor map.
Using Kuratowski-Zorn Lemma there is a closed and $f^m$-invariant set $\Lambda\subset Y$ such that $\pi(\Lambda)=\set{0,1}^{\N}$ and if $Z\subset \Lambda$
is a closed and $f^m$-invariant subset such that $\pi(Z)=\{0,1\}^{\N}$ then $Z=\Lambda$.
Let $q\in \set{0,1}^{\N}$ be a point with dense orbit under $\sigma$ and fix $y\in \pi^{-1}(q)\cap \Lambda$.
There is an increasing sequence $\{n_i\}$ such that $\lim_{i\to \infty}\sigma^{n_i}(q)=q$ and we can also
assume, passing to a subsequence if necessary, that the limit $\lim_{i\to \infty} f^{mn_i}(y)=z$ exists.
But then if we put $Z=Orb(z,f^m)\subset \omega(y,f^m)$ then $q\in \pi(Z)$ and so $\pi(Z)=\set{0,1}^{\N}$.
It immediately implies that $y\in \omega(y,f^m)=\Lambda$, and hence
$(\Lambda,f^m)$ is transitive.
Since the orbit of $y$ is dense in $(\Lambda,f^m)$ and $(\set{0,1}^{\N},\sigma)$ is not minimal, we see that $y\in R(f)\setminus M(f)$.
If we take, $\alpha=1000\ldots \in \Omega(\sigma)\setminus R(\sigma)$,
then $\pi^{-1}(\alpha)\subset \Omega(\Lambda,f^m)\setminus R(f^m)\subset \Omega(f)\setminus R(f)$, since $R(f)=R(f^m)$.
Finally, if $(Z,\sigma)$ is a weakly mixing minimal system in $(\{0,1\}^{\N},\sigma)$ (e.g. a Chac\'{o}n flow)
and $z$ is any minimal point in $\pi^{-1}(Z)$ then $z\in M(f^m)\setminus RR(f^m)=M(f)\setminus RR(f)$.
\end{proof}

It is shown in~\cite{MO} that the set of regularly recurrent points is dense if
$(X,f)$ is a non-wandering system with the shadowing property.
If the system $(X,f)$ is also sensitive, then  $M(f)\setminus RR(f)$ is dense in $X$.
We can improve the conclusion as follows (Example~\ref{ex:dense_sp} below shows that while every sensitive system
has a dense set of sensitive points, the converse is not true).

\begin{prop}\label{prop:dense-sensitive-points}
Let $(X,f)$ be a  non-wandering system with the shadowing property.
If $(X,f)$ has a dense set of sensitive points, then sets $\Omega(f)\setminus R(f)$,
 $R(f)\setminus M(f)$ and
 $M(f)\setminus RR(f)$ are dense in $X$.
\end{prop}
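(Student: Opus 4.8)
The strategy is to combine Proposition~\ref{prop:sensitive-point} with Lemma~\ref{lem:factor_Rf} and then localize. Fix a non-empty open set $U\subset X$; I want to produce points of each of the three types inside $U$. Since the set of sensitive points is dense, there is a sensitive point $u$ of $(X,f)=(\Omega(f),f)$ in $U$, and (shrinking $U$) we may assume $\overline{B}(u,3\ep)\subset U$ for a suitable $\ep$. Proposition~\ref{prop:sensitive-point} then yields a positive integer $m$, a subsystem $(Y,f^m)$ of $(X,f^m)$ with $Y\subset U$, and a factor map $\pi\colon(Y,f^m)\to(\{0,1\}^{\N},\sigma)$. Thus the hypotheses of Lemma~\ref{lem:factor_Rf} are met by this $Y$, and the lemma produces points of $\Omega(f)\setminus R(f)$, $R(f)\setminus M(f)$ and $M(f)\setminus RR(f)$ lying in $Y\subset U$. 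Since $U$ was an arbitrary non-empty open set, each of the three sets meets every non-empty open set, i.e. each is dense in $X$.

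The only subtlety is checking that the proof of Lemma~\ref{lem:factor_Rf} really keeps all the constructed points inside $Y$ (not merely inside $X$), so that the localization to $U$ works. Re-reading that proof: the point $y$ with $\pi(y)=q$ lies in $\Lambda\subset Y$; the point $z=\lim f^{mn_i}(y)$ lies in $\omega(y,f^m)\subset\Lambda\subset Y$ by closedness and $f^m$-invariance of $Y$; the point in $\pi^{-1}(\alpha)$ with $\alpha=1000\ldots$ is taken inside $\Lambda\subset Y$; and the minimal point $z$ in $\pi^{-1}(Z)$ (for $(Z,\sigma)$ a weakly mixing minimal subsystem of the shift) can be chosen inside $\pi^{-1}(Z)\cap\Lambda$, since $\pi^{-1}(Z)\cap\Lambda$ is a non-empty closed $f^m$-invariant set and hence contains a minimal point. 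So all three witnesses indeed lie in $Y\subset U$, and $R(f)=R(f^m)$, $M(f)=M(f^m)$, $RR(f)=RR(f^m)$ transfer the conclusions back to $f$ exactly as in the lemma.

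I expect no genuine obstacle here — the work has essentially been done in the two preceding results; the proposition is their local synthesis. The one point that needs a word of care is that Proposition~\ref{prop:sensitive-point} is stated for a sensitive point of $(\Omega(f),f)$, so I should note at the start that non-wandering means $\Omega(f)=X$, hence a sensitive point of $(X,f)$ in the usual sense is a sensitive point of $(\Omega(f),f)$, and the density hypothesis supplies such a point in any prescribed neighborhood. With that observation the argument above goes through verbatim.
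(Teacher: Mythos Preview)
Your proposal is correct and follows essentially the same route as the paper: pick a sensitive point $u\in U$, invoke Proposition~\ref{prop:sensitive-point} to obtain $(Y,f^m)\subset U$ factoring onto the full shift, then apply Lemma~\ref{lem:factor_Rf}. Your extra paragraph verifying that the witnesses from Lemma~\ref{lem:factor_Rf} lie in $Y$ is careful but unnecessary, since the lemma's \emph{statement} already asserts $(\Omega(f)\setminus R(f))\cap Y\neq\emptyset$ etc., not merely non-emptiness in $X$; similarly your remark that $\Omega(f)=X$ makes a sensitive point of $(X,f)$ a sensitive point of $(\Omega(f),f)$ is a correct and worthwhile observation that the paper leaves implicit.
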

\begin{proof}
Fix a non-empty open subset $U$ of $X$.
There exists a sensitive point $u\in U$.
By Proposition~\ref{prop:sensitive-point},
there exists a positive integer $m$,
a subsystem $(Y,f^m)$ of $(X,f^m)$ with $Y\subset U$
and a factor map $\pi\colon (Y,f^m)\to (\{0,1\}^{\N},\sigma)$.
Then the result follows by Lemma~\ref{lem:factor_Rf}.
\end{proof}

\begin{exmp}\label{ex:dense_sp}
Let $T\colon [0,1]\to [0,1]$ be the standard tent map, that is $T(x)=1-|1-2x|$.
Let
$$X=\{(0,0)\}\cup\bigcup_{k=1}^\infty \{\tfrac{1}{k}\}\times[0,\tfrac{1}{k}],$$
be endowed with metric induced by the Euclidean metric and put
$$
f(\tfrac{1}{k},x)=\begin{cases}
(\tfrac{1}{k},\tfrac{1}{k} T(kx)),& \text{ for }k>0\\
(0,0),& \text{ otherwise}
\end{cases}.
$$
Then this map has the shadowing property. Simply, if we fix $\eps>0$ then there
is $\delta>0$ such that if $(\tfrac{1}{k},x),(\tfrac{1}{s},y)\in X$ and $\tfrac{1}{k}>\sqrt{\eps}$
then either $k=s$ or $|\tfrac{1}{k}-\tfrac{1}{s}|>\delta$. This immediately implies that for any $\delta$-pseudo-orbit
$\xi=\set{z_n}_{n=0}^\infty=\{(\tfrac{1}{k_n},x_n)\}_{n=0}^\infty$ we either have that $\xi \subset [0,\sqrt{\eps}]\times [0,\sqrt{\eps}]$
or $k_n=k_{n+1}$ for every $n\geq 0$. But in the first case $(0,0)$ is an $\eps$-tracing point for $\xi$
and in the second case we can use the shadowing property of $T$ (after appropriate rescaling of the sequence $\set{x_n}_{n=0}^\infty$).
This proves that $f$ has the shadowing property but it is also clear that $f$ is equicontinuous at $(0,0)$.
\end{exmp}

Now we can provide a series of conditions equivalent, for systems with the shadowing property,
to positive entropy.

\begin{thm}\label{thm:shadowing-positive-entropy}
Let $(X,f)$ be a dynamical system with the shadowing property.
Then the following conditions are equivalent:
\begin{enumerate}
  \item\label{spe:c1} $(X,f)$ has positive entropy;
  \item\label{spe:c2} there exists a sensitive transitive subsystem $(M,f)$ of $(X,f)$;
  \item\label{spe:c3} there exists a sensitive point in $(\Omega(f),f)$;
  \item\label{spe:c4} there exists a positive integer $m$, a subsystem $(Y,f^m)$ for $f^m$
   and a factor map $\pi:(Y,f^m)\to (\{0,1\}^{\N},\sigma)$;
  \item\label{spe:c5} $\Omega(f)\setminus R(f)$ is not empty;
  \item\label{spe:c6} $R(f)\setminus M(f)$ is not empty;
  \item\label{spe:c7} $M(f)\setminus RR(f)$ is not empty.
\end{enumerate}
\end{thm}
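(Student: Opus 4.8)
The plan is to establish the equivalence as a cycle of implications, routing everything through condition \eqref{spe:c4} (existence of a full-shift factor for some power), which we already know how to produce from sensitivity via Proposition~\ref{prop:sensitive-point} and which already delivers conditions \eqref{spe:c5}--\eqref{spe:c7} via Lemma~\ref{lem:factor_Rf}. Concretely, I would prove \eqref{spe:c4}$\Rightarrow$\eqref{spe:c1} (the factor map is entropy non-decreasing, so $h(X,f)=\frac1m h(X,f^m)\ge \frac1m h(Y,f^m)\ge \frac1m h(\{0,1\}^{\N},\sigma)=\frac{\log 2}{m}>0$, exactly as in the proof of Theorem~\ref{thm:non-wandering}), then close the remaining gaps by showing that each of \eqref{spe:c1}, \eqref{spe:c2}, \eqref{spe:c3}, \eqref{spe:c5}, \eqref{spe:c6}, \eqref{spe:c7} forces \eqref{spe:c3} (a sensitive point in $\Omega(f)$), from which \eqref{spe:c4} follows by Theorem~\ref{thm:shad:omega} together with Proposition~\ref{prop:sensitive-point} applied to $(\Omega(f),f)$ (which has the shadowing property).

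The heart of the argument is therefore: \emph{if $(\Omega(f),f)$ is equicontinuous then none of \eqref{spe:c1}, \eqref{spe:c2}, \eqref{spe:c5}, \eqref{spe:c6}, \eqref{spe:c7} can hold.} For \eqref{spe:c5}--\eqref{spe:c7} this is immediate from Proposition~\ref{prop:eq-RR}: an equicontinuous non-wandering system with the shadowing property has $RR=X$, hence $\Omega\setminus R=R\setminus M=M\setminus RR=\emptyset$, and since $\Omega(f)$ is the relevant ambient system and $R(f),M(f),RR(f)$ all lie inside $\Omega(f)$ (recurrent points are non-wandering), emptiness of the differences computed inside $\Omega(f)$ gives emptiness in $X$. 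For \eqref{spe:c1}, equicontinuity of $(\Omega(f),f)$ forces $h(\Omega(f),f)=0$, and since every point of $X$ is forward asymptotic in the appropriate sense to $\Omega(f)$—more carefully, the entropy of a system equals the entropy of its restriction to the non-wandering set, a standard fact—we get $h(X,f)=0$. For \eqref{spe:c2}, a sensitive transitive subsystem $(M,f)$ has $M\subseteq \overline{Orb}$-type structure contained in $\Omega(f|_M)\subseteq\Omega(f)$; being sensitive, $(M,f)$ is not equicontinuous, so it contains a sensitive point, which is then a sensitive point of $(\Omega(f),f)$, giving \eqref{spe:c3}. Conversely \eqref{spe:c1}$\Rightarrow$\eqref{spe:c3} is the contrapositive of "$(\Omega(f),f)$ equicontinuous $\Rightarrow h(X,f)=0$", and \eqref{spe:c3}$\Rightarrow$\eqref{spe:c2} can be extracted by taking a minimal subsystem inside the full-shift-factor construction, or more directly by noting that the $Y$ built in Proposition~\ref{prop:sensitive-point} contains a transitive subsystem mapping onto $(\{0,1\}^{\N},\sigma)$ (as in Lemma~\ref{lem:factor_Rf}), which is sensitive because it has positive entropy and hence is not equicontinuous.

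The reverse implications \eqref{spe:c5}$\Rightarrow$\eqref{spe:c3}, \eqref{spe:c6}$\Rightarrow$\eqref{spe:c3}, \eqref{spe:c7}$\Rightarrow$\eqref{spe:c3} are again just the contrapositive of Proposition~\ref{prop:eq-RR}: if $(\Omega(f),f)$ were equicontinuous it would have $RR=\Omega(f)$ and all three difference sets would be empty. Assembling: \eqref{spe:c1}$\Leftrightarrow$\eqref{spe:c3} directly; \eqref{spe:c3}$\Rightarrow$\eqref{spe:c4} by Theorem~\ref{thm:shad:omega} and Proposition~\ref{prop:sensitive-point}; \eqref{spe:c4}$\Rightarrow$\eqref{spe:c1} by the entropy estimate; \eqref{spe:c4}$\Rightarrow$\eqref{spe:c5},\eqref{spe:c6},\eqref{spe:c7} by Lemma~\ref{lem:factor_Rf} (noting $Y$ need not be inside $\Omega(f)$ a priori, but a full-shift factor of $(Y,f^m)$ forces $Y$ to contain recurrent-type points whose orbit closures are all non-equicontinuous, pushing us back to \eqref{spe:c3}); each of \eqref{spe:c5},\eqref{spe:c6},\eqref{spe:c7}$\Rightarrow$\eqref{spe:c3} by Proposition~\ref{prop:eq-RR}; and \eqref{spe:c2}$\Leftrightarrow$\eqref{spe:c3} as above. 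I expect the only genuinely delicate point to be the bookkeeping around where $Y$ lives in implication \eqref{spe:c4}$\Rightarrow$\eqref{spe:c5}--\eqref{spe:c7}: Lemma~\ref{lem:factor_Rf} is stated without a non-wandering hypothesis, so one must check that $R(f)=R(f^m)$ and that the transitive subsystem $\Lambda$ produced there genuinely injects its "bad" points into $X$'s difference sets; this is exactly what the lemma's proof already does, so it should be quotable verbatim.
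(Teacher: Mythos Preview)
Your proposal is correct and follows essentially the same architecture as the paper: Proposition~\ref{prop:sensitive-point} for \eqref{spe:c3}$\Rightarrow$\eqref{spe:c4}, the entropy estimate for \eqref{spe:c4}$\Rightarrow$\eqref{spe:c1}, Lemma~\ref{lem:factor_Rf} for \eqref{spe:c4}$\Rightarrow$\eqref{spe:c5}--\eqref{spe:c7}, and the contrapositive via Proposition~\ref{prop:eq-RR} (applied to $(\Omega(f),f)$, which has the shadowing property by Theorem~\ref{thm:shad:omega}) for the reverse directions \eqref{spe:c5},\eqref{spe:c6},\eqref{spe:c7}$\Rightarrow$\eqref{spe:c1}.

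The one substantive difference is how condition \eqref{spe:c2} is reached. The paper proves \eqref{spe:c1}$\Rightarrow$\eqref{spe:c2} directly: by the variational principle there is an ergodic measure of positive entropy, its support $M$ is a transitive E-system, and by the Glasner--Weiss dichotomy \cite{GW93} such a system is either equicontinuous or sensitive; positive entropy rules out the former. You instead propose \eqref{spe:c3}$\Rightarrow$\eqref{spe:c2} by extracting the transitive $\Lambda$ built inside the proof of Lemma~\ref{lem:factor_Rf}. This also works, but two small points need tightening. First, $\Lambda$ is transitive for $f^m$, not $f$, so you must pass to the $f$-orbit closure $\overline{\bigcup_{i=0}^{m-1}f^i(\Lambda)}$ to obtain an $f$-transitive subsystem. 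Second, your phrase ``sensitive because it has positive entropy and hence is not equicontinuous'' is not quite enough, since for arbitrary systems ``not equicontinuous'' is strictly weaker than ``sensitive''; you need the Auslander--Yorke dichotomy for transitive systems together with the fact that almost-equicontinuous transitive systems are uniformly rigid (hence have zero entropy), or else simply quote the same Glasner--Weiss result the paper uses. Either route closes the cycle; the paper's is slightly cleaner because it avoids reopening the proof of Lemma~\ref{lem:factor_Rf}.
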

\begin{proof}
$\eqref{spe:c1}\Rightarrow\eqref{spe:c2}$ By the variation principle of topological entropy,
there exists an ergodic invariant Borel probability measure $\mu$ on $X$
such that the measure-theoretic entropy of $\mu$ is positive.
Denote by $M$ the support of $\mu$.
By~\cite{GW93}, $(M,f)$ is either sensitive or equicontinuous
(it is so called E-system, i.e. transitive map with fully supported measure).
Then the only possibility is that $(M,f)$ is sensitive, since it has positive entropy.

$\eqref{spe:c2}\Rightarrow\eqref{spe:c3}$ Let $x\in M$. Then $x$ is a sensitive point of $(M,f)$.
Since $M\subset \Omega(f)$, $x$ is also a sensitive point of $(\Omega(f),f)$.

$\eqref{spe:c3}\Rightarrow\eqref{spe:c4}$ follows from Proposition~\ref{prop:sensitive-point}.

$\eqref{spe:c4}\Rightarrow\eqref{spe:c1}$ follows by $h(X,f)=\frac{1}{m} h(X,f^m)\geq \frac{1}{m} h(\{0,1\}^{\N},\sigma)>0$.

$\eqref{spe:c4}\Rightarrow (\ref{spe:c5},\ref{spe:c6},\ref{spe:c7})$
Applying Lemma~\ref{lem:factor_Rf},
we obtain that $\Omega(f)\setminus R(f)$, $R(f)\setminus M(f)$, and $M(f)\setminus RR(f)$ are non-empty.

Finally, observe that if $(X,f)$ has zero topological entropy, then $(\Omega(f),f)$ also has zero entropy by the variational principle.
By Theorem~\ref{thm:non-wandering}, $(\Omega(f),f)$ must be equicontinuous since $(\Omega(f),f)$ also has the shadowing property by Theorem~\ref{thm:shad:omega}.
In particular, in this situation we have $\Omega(f)=R(f)=M(f)=RR(f)$ by Proposition~\ref{prop:eq-RR}, which proves
$\eqref{spe:c5}\Rightarrow \eqref{spe:c1}$, $\eqref{spe:c6}\Rightarrow \eqref{spe:c1}$ and $\eqref{spe:c7}\Rightarrow \eqref{spe:c1}$
completing the proof.
\end{proof}

\section{Weakly mixing systems with the shadowing property}
Recall that a dynamical $(X,f)$ is called \emph{weakly mixing} if $(X\times X,f\times f)$ is transitive.
By the well known  Furstenberg Intersection Lemma, if $(X,f)$ is weakly mixing,
then $(X^n,f^{(n)})$ is transitive for all $n\in\mathbb{N}$, where $X^n=X\times X\times \dotsb \times X$ ($n$-times)
and $f^{(n)}=f\times f\times \dotsb \times f$ ($n$-times).

It is shown in~\cite{L11} that for a continuous map $f\colon [0,1]\to [0,1]$
weak mixing is equivalent to uniformly positive entropy of all orders.
The main result of this section is that the similar result also holds for non-trivial systems with the shadowing property.

\begin{defn}[\cite{B92,HY06}]
A dynamical system $(X,f)$ is said to have \emph{uniformly positive entropy of all orders}
if for every $n\geq 2$, any cover of $X$ by $n$ non-dense open sets has positive entropy.
\end{defn}
A system with uniformly positive entropy of all orders is a topological analogue
of the Kolmogorov system in ergodic theory, so such a system is also called a \emph{topological K system}.

\begin{defn}[\cite{B92,HY06}]
A dynamical system $(X,f)$ is said to have the \emph{strong Property P}
if for any $n\geq 2$ and any non-empty open subsets $U_0,U_1,\dotsc,U_{n-1}$ of $X$
there exists an integer $N$ such that whatever $k\geq 2$,
whatever $s=(s(1),s(2),\dotsc,s(k))\in\{0,1,\dots,n-1\}^k$,
there exists $x\in X$ with $x\in U_{s(1)},f^N(x)\in U_{s(2)}\dotsc, f^{(k-1)N}x\in U_{s(k)}$.
\end{defn}

For $i\in \set{0,1,\ldots, d}$, denote by $C[i]$ the \emph{cylinder set} defined by $i$,
that is
$$C[i]=\set{x \in \{0,1,\dotsc,d\}^{\N} : x_0=i}.$$

\begin{thm}\label{prop:weak-mixing}
Let $(X,f)$ be a non-trivial non-wandering system with the shadowing property.
Then the following conditions are equivalents:
\begin{enumerate}
  \item\label{equiv:wmpte:c1} $(X,f)$ is weakly mixing;
  \item\label{equiv:wmpte:c2} For every $d\geq 1$ and every non-empty open subsets $U_0,U_1,\dotsc,U_d$ of $X$,
there exists a positive integer $m\in\mathbb{N}$, a subsystem $(Y,f^m)$ of $(X,f^m)$ and
 a factor map $\pi:(Y,f^m)\to (\{0,1,\dotsc,d\}^{\N},\sigma)$
 such that $\pi^{-1}(C[i])\subset U_i$ for $i=0,1,\dotsc,d$;
 \item\label{equiv:wmpte:c3} $(X,f)$ has the strong Property P;
  \item\label{equiv:wmpte:c4} $(X,f)$ has uniformly positive entropy of all orders.
\end{enumerate}
\end{thm}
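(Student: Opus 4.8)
The plan is to establish the cycle $\eqref{equiv:wmpte:c1}\Rightarrow\eqref{equiv:wmpte:c2}\Rightarrow\eqref{equiv:wmpte:c3}\Rightarrow\eqref{equiv:wmpte:c4}\Rightarrow\eqref{equiv:wmpte:c1}$, since the last two implications are essentially standard facts about topological $K$ systems. Indeed, $\eqref{equiv:wmpte:c2}\Rightarrow\eqref{equiv:wmpte:c3}$ is immediate once we notice that a factor map $\pi\colon(Y,f^m)\to(\set{0,1,\dotsc,d}^{\N},\sigma)$ with $\pi^{-1}(C[i])\subset U_i$ lets us realize any prescribed symbolic itinerary over the alphabet $\set{0,\dotsc,d}$ by lifting it through $\pi$; taking $N=m$ gives the strong Property P (one only has to rename the open sets: for given $U_0,\dotsc,U_{n-1}$ apply \eqref{equiv:wmpte:c2} with $d=n-1$). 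The implication $\eqref{equiv:wmpte:c3}\Rightarrow\eqref{equiv:wmpte:c4}$ is proved in \cite{B92,HY06}, and $\eqref{equiv:wmpte:c4}\Rightarrow\eqref{equiv:wmpte:c1}$ is classical (uniformly positive entropy of all orders implies weak mixing); both can simply be quoted.

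So the heart of the matter is $\eqref{equiv:wmpte:c1}\Rightarrow\eqref{equiv:wmpte:c2}$, and here I would combine Bowen's specification argument with the construction used in Proposition~\ref{prop:sensitive-point}. First, fix $d\geq1$ and non-empty open $U_0,\dotsc,U_d$. Since $(X,f)$ is weakly mixing and non-trivial, it is sensitive; being also non-wandering with shadowing, $(X,f)$ is chain mixing (weak mixing implies total transitivity, hence chain transitivity, and together with shadowing one gets chain mixing — or one invokes that weak mixing $+$ shadowing yields the specification property, as in \cite[Proposition~23.20]{Denker}). Shrinking the $U_i$ we may assume each $U_i$ contains a closed ball $\overline B(u_i,3\eps)\subset U_i$ for a common small $\eps$. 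Let $\delta$ be given by the shadowing property for this $\eps$. Using weak mixing/specification, for each ordered pair $(i,j)$ choose a $\delta$-chain from a point near $u_i$ to a point near $u_j$; by passing to $f^m$ for a suitable common block length $m$ (a common multiple of the transition times, enlarged so that $m$ exceeds all chain lengths and so that one can insert "dwelling" at $u_i$), build for each $i\in\set{0,\dotsc,d}$ a finite $\delta$-pseudo-orbit segment $\eta(i)$ of length exactly $m$ that starts within $\delta/4$ of $u_i$ and lands within $\delta/4$ of $u_i$ again, while staying, at time $0$, inside $\overline B(u_i,\eps)$. Concatenating blocks $\eta(a_0)\eta(a_1)\dotsb$ for $\alpha=a_0a_1\dotsb\in\set{0,\dotsc,d}^{\N}$ gives a $\delta$-pseudo-orbit, and by shadowing a point $y_\alpha$ that $\eps$-traces it; then $f^{mi}(y_\alpha)\in\overline B(u_{a_i},2\eps)\subset U_{a_i}$ for all $i$.

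The cleanup then mirrors Proposition~\ref{prop:sensitive-point}: set $Y_\alpha=\set{x : f^{mi}(x)\in\overline B(u_{a_i},2\eps)\text{ for all }i\geq0}$, let $Y=\bigcup_\alpha Y_\alpha$; closedness of $Y$ and $f^m$-invariance follow exactly as there (a limit of points in $Y_{\alpha_n}$ lies in $Y_{\alpha}$ for a limit symbol $\alpha$, and $f^m$ shifts the itinerary). The subtle point is that the sets $Y_\alpha$ need not be pairwise disjoint here — different $u_i$'s may be close — so to get a well-defined factor map I would first shrink $\eps$ so that $\overline B(u_i,2\eps)\subset U_i$ is consistent, and then define $\pi$ not by $\pi(Y_\alpha)=\set\alpha$ but via the itinerary: for $x\in Y$ pick any $\alpha$ with $x\in Y_\alpha$ and set $\pi(x)=\alpha$. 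To make this single-valued one can instead \emph{relabel}: replace each $U_i$ by a smaller open $U_i'$ and each $\overline B(u_i,2\eps)$ by pairwise disjoint closed sets $W_i$ with $W_i\subset U_i$, which is possible after passing to points $u_i$ that are far apart in the sense of Proposition~\ref{prop:sensitive-point} — using sensitivity to separate, and weak mixing to still connect them by $\delta$-chains. With the $W_i$ disjoint, $Y_\alpha\cap Y_\beta=\emptyset$ for $\alpha\neq\beta$, $\pi$ is well-defined, continuous (by closedness of the $Y_\alpha$ and compactness), surjective (each $Y_\alpha\neq\emptyset$ by the shadowing point $y_\alpha$), and satisfies $\pi\circ f^m=\sigma\circ\pi$ together with $\pi^{-1}(C[i])\subset U_i$. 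The main obstacle, then, is precisely arranging the target sets $W_i$ to be simultaneously pairwise disjoint, contained in the respective $U_i$, and still mutually reachable by $\delta$-pseudo-orbits of a \emph{common} block length $m$; I expect this bookkeeping — choosing the separated base points $u_i$ and the uniform $m$ — to be where the weak mixing hypothesis is really used and where the argument requires the most care.
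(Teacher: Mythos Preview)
Your overall cycle $\eqref{equiv:wmpte:c1}\Rightarrow\eqref{equiv:wmpte:c2}\Rightarrow\eqref{equiv:wmpte:c3}\Rightarrow\eqref{equiv:wmpte:c4}\Rightarrow\eqref{equiv:wmpte:c1}$ matches the paper's, and your treatment of the last three arrows is essentially identical to theirs (quote \cite{HY06} for $\eqref{equiv:wmpte:c3}\Rightarrow\eqref{equiv:wmpte:c4}$ and \cite{B92} for $\eqref{equiv:wmpte:c4}\Rightarrow\eqref{equiv:wmpte:c1}$; read off strong Property~P from the factor map with $N=m$).

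The gap is in $\eqref{equiv:wmpte:c1}\Rightarrow\eqref{equiv:wmpte:c2}$, precisely at the point you flag as ``the main obstacle''. You build $d+1$ blocks $\eta(i)$, each a $\delta$-pseudo-orbit \emph{loop} that starts within $\delta/4$ of $u_i$ and ends within $\delta/4$ of $u_i$. For the concatenation $\eta(a_0)\eta(a_1)\dotsb$ to be a $\delta$-pseudo-orbit you need $d(f(\text{last of }\eta(i)),\ \text{first of }\eta(j))<\delta$ for \emph{every} pair $(i,j)$; with loops this forces all the $u_i$ to lie within $\delta$ of one another. But your proposed fix for disjointness of the target sets $W_i$ is to push the $u_i$ \emph{far apart}. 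These two requirements are incompatible as written: once the basepoints are separated you can no longer freely concatenate $\eta(i)\eta(j)$. (Defining $(d+1)^2$ transition blocks $\eta(i,j)$ would repair this, but that is not what you described, and the bookkeeping becomes heavier.)

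The paper dissolves this tension with a single clean idea: choose a transitive point $(u_0,\dotsc,u_d)$ of the \emph{product} system $(X^{d+1},f^{(d+1)})$ with $\max_{i\neq j}d(u_i,u_j)<\delta/4$. Then the $u_i$ are \emph{close} (so loops $\eta(i)=(u_i,f(u_i),\dotsc,f^{m-1}(u_i))$ of a common length $m$ concatenate freely, using recurrence of the product point to get $d(f^m(u_i),u_i)<\delta/4$), while product transitivity gives a single time $r<m$ at which the coordinates \emph{separate}: $d(f^r(u_i),z_i)$ is small for pre-chosen pairwise distant $z_i\in U_i$. The disjoint targets are then $W_i=\overline{B}(f^r(u_i),\eps)\subset U_i$, and one defines $Y_\alpha=\{x:f^{mi}(x)\in W_{a_i}\ \text{for all }i\}$, observing that $f^r$ of any tracing point lies in $Y_\alpha$. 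The separation happens at an \emph{intermediate} time along each block, not at the endpoints; that is the trick your plan is missing.
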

\begin{proof}
  $\eqref{equiv:wmpte:c1}\Rightarrow\eqref{equiv:wmpte:c2}$
  Choose points $z_i\in U_i$ for $i=0,1,\dotsc,d$ such that $z_i \neq z_j$ for $i\neq j$.
  Let $\lambda<\min_{i\neq j} {d(z_i,z_j)}$ with $\overline{B}(z_i,\lambda)\subset U_i$ for $i=0,1,\dotsc,d$.
  Let $\eps<\frac{1}{8}\lambda$ and let $\delta>0$ such that every $\delta$-pseudo orbit of
  $f$ is $\eps$-traced by some point in $X$.
  Choose a transitive point $(u_0,u_1,\dotsc,u_d)$ in $(X^{d+1}, f^{(d+1)})$ with
  $\max_{i\neq j} d(u_i,u_j)<\frac{1}{4}\delta$.
  Then there exists a positive integer $r$ such that
  $d(f^r(u_i),z_i)<\frac{1}{4} \eps$ for $i=0,1,\dotsc,d$, and a positive integer $m>r$
  such that $d(f^m(u_i),u_i)<\frac{1}{4} \delta$ for $i=0,1,\dotsc,d$.

Define $d+1$ finite sequences as follows
\begin{align*}
  \eta(0)&=(u_0,f(u_0),\dotsc,f^{m-1}(u_0)),\\
  \eta(1)&=(u_1,f(u_1),\dotsc,f^{m-1}(u_1)),\\
 & \dotsc\\
  \eta(d)&=(u_d,f(u_d),\dotsc,f^{m-1}(u_d)).
\end{align*}

Let $W_0=\overline{B}(f^r(u_0),\eps)$, $W_1=\overline{B}(f^r(u_1),\eps)$,
$\dotsc$, $W_d=\overline{B}(f^r(u_d),\eps)$.
Then $W_0$, $W_1$, $\dotsc$, $W_d$ are  non-empty closed subsets, and $dist(W_i,W_j)>\eps$ for $i\neq j$.
Clearly $W_i\subset \overline{B}(z_1,2\eps)\subset \overline{B}(z_1,\lambda)\subset U_i$.

If $\alpha=a_0a_1\dotsc a_n\dotsc\in\{0,1,\dotsc,d\}^{\N}$,
we set $Y_\alpha=\{x\in X: f^{mi}(x)\in W_{a_i}\text{ for }i=0,1,\dotsc\}$.
Then $Y_\alpha$ is a closed subset of $X$. We should show that $Y_\alpha$ is not empty.
Let $y_\alpha$ be a point $\eps$-tracing the $\delta$-pseudo orbit $\eta(a_0)\eta(a_1)\dotsb\eta(a_n)\dotsb$.
Then $f^{r}(y_\alpha)\in Y_\alpha$. This shows that $Y_\alpha$ is not empty.
Let $Y=\bigcup_{\alpha\in\{0,1,\dotsc,d\}^{\N}}Y_\alpha$
and let $\pi:Y\to \{0,1,\dotsc,d\}^{\N}$, $\pi(Y_\alpha)=\alpha$.
Then $Y$ is closed and $\pi$ is a factor map between $(Y,f^m)$ and $(\{0,1,\dotsc,d\}^{\N},\sigma)$.
Finally, note that $\pi^{-1}(C[i])=\bigcup_{\alpha\in\{0,1,\dotsc,d\}^{\N}}Y_{i\alpha}\subset U_i$
for $i=0,1,\ldots,d$.

$\eqref{equiv:wmpte:c2}\Rightarrow\eqref{equiv:wmpte:c3}$
Fix $n\geq 2$ and non-empty open subsets $U_0,U_1,\dotsc,U_{n-1}$ of $X$.
Then there exists a positive integer $m\in\mathbb{N}$ and a subsystem $(Y,f^m)$ and
 a factor map $\pi:(Y,f^m)\to (\{0,1,\dotsc,n-1\}^{\N},\sigma)$
 such that $\pi^{-1}(C[i])\subset U_i$ for $i=0,1,\dotsc,n-1$.

For each $k\geq 2$,
and $s=(s(1),s(2),\dotsc,s(k))\in\{0,1,\dots,n-1\}^k$,
if $z\in \{0,1,\dotsc,n-1\}^{\N}$ has $s$ as its prefix,
then for each $x\in\pi^{-1}(z)$ we have
$x\in \pi^{-1}(C[s(1)])\subset U_{s(1)}, f^{mM}(x)\in\pi^{-1}(C[s(2)])\subset U_{s(2)},\dotsc,
f^{(k-1)mM}x\in \pi^{-1}(C[s(k)])\subset U_{s(k)}$.

$\eqref{equiv:wmpte:c3}\Rightarrow\eqref{equiv:wmpte:c4}$ follows from~\cite[Theorem~7.4]{HY06}.

$\eqref{equiv:wmpte:c4}\Rightarrow\eqref{equiv:wmpte:c1}$ follows from~\cite[Propsition~2]{B92}.
\end{proof}

\begin{defn}
A dynamical system $(X,f)$ is \emph{positively expansive (with an expansive constant $\beta>0$)} if
for any $x,y \in X$ with $x \neq y$ there is $n > 0$ such that $d(f^n(x),f^n(y)) >\beta$.
\end{defn}

\begin{rem}\label{rem:positive-expansive-conjugacy}
If a dynamical system $(X,f)$ is positively expansive with expansive constant $\beta$,
then for every pseudo-orbit $\xi$ there is at most one point $(\beta/2)$-tracing $\xi$.
Then it is easy to see that in the proof of Theorem~\ref{prop:weak-mixing},
every $Y_\alpha$ is a singleton, and as a consequence $\pi$ is a conjugacy.
\end{rem}

The following fact highlights an important property of (positively) expansive systems
(see \cite[Theorem~3.4.4.]{AH94}).
\begin{thm}[Topological Decomposition Theorem]
Let $(X,f)$ be a positively expansive dynamical system with the shadowing property and assume that $f$ surjective.
Then the following properties hold:
\begin{enumerate}
\item $\Omega(f)$ contains a finite
sequence of pairwise disjoint $f$-invariant closed subsets $B_i$ ($1 \leq i \leq l$)
such that $\Omega(f)=\bigcup_{i=1}^l B_i$ and
each subsystem $(B_i,f)$ is transitive (sets $B_i$ are called \emph{basic sets}).
\item For every basic set $B$ there is an integer $a>0$ and a finite
sequence of pairwise disjoint $f^a$-invariant closed sets $C_i\subset B$ ($0 \leq i < a$) such that $B=\bigcup_{i=0}^{a-1} C_i$, $f(C_i)=C_{i+1\pmod{a}}$ and
each subsystem $(C_i,f^a)$ is strongly mixing (sets $C_i$ are called \emph{elementary sets}).
\end{enumerate}
\end{thm}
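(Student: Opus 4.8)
The plan is to recover the whole statement from the chain-recurrence structure of $f$ on $\Omega(f)$, using positive expansivity only at the single point where it is indispensable --- making the pieces of the decomposition relatively open, hence finite in number --- and using the shadowing property to promote ``chain'' transitivity and mixing to honest transitivity and mixing. First I would pass to $\Omega(f)$: by Theorem~\ref{thm:shad:omega} the subsystem $(\Omega(f),f)$ still has the shadowing property, it is positively expansive with the same constant $\beta$, and $\Omega(f)$ is closed and $f$-invariant. The shadowing property moreover forces $\Omega(f)$ to equal the chain-recurrent set $CR(f)=\set{x : \text{for every }\eps>0\text{ there is an }\eps\text{-chain from }x\text{ to }x}$: concatenating such a chain of length $k$ with itself gives a periodic $\delta$-pseudo-orbit whose $\eps$-tracing point $p$ satisfies $p,f^k(p)\in B(x,\eps)$, so $f^k(B(x,\eps))\cap B(x,\eps)\neq\emptyset$ and $x\in\Omega(f)$; the reverse inclusion is automatic. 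So it suffices to decompose $X:=\Omega(f)=CR(f)$.

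For part~(1) I would use the chain-recurrence equivalence relation on $X$: $x\sim y$ iff for every $\eps>0$ there are $\eps$-chains from $x$ to $y$ and from $y$ to $x$. Its classes are closed, pairwise disjoint, $f$-invariant, cover $X$, and $f$ is chain transitive on each of them. The crucial --- and only --- use of positive expansivity is that each class is \emph{open} in $X$; this is the structural heart of the decomposition for positively expansive maps with shadowing, resting on a uniform estimate coupling $\beta$ with a shadowing modulus (in particular on the uniqueness of $(\beta/2)$-tracing points recorded in Remark~\ref{rem:positive-expansive-conjugacy}). Once the classes are clopen their mutual distances are bounded below, so compactness of $X$ leaves finitely many of them, say $B_1,\dots,B_l$, all clopen and $f$-invariant. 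Being clopen in $\Omega(f)$, each $B_i$ inherits the shadowing property, and ``chain transitive $+$ shadowing $\Rightarrow$ transitive'': for non-empty open $U,V\subseteq B_i$ choose $u\in U$, $v\in V$, shrink $\eps$ below the shadowing threshold with $B(u,\eps)\subseteq U$ and $B(v,\eps)\subseteq V$, let $w$ be an $\eps$-tracing point of an $\eps$-chain from $u$ to $v$ of length $n+1$, and read off $w\in U$, $f^n(w)\in V$. These $B_i$ are the basic sets.

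For part~(2), fix a basic set $B$; then $(B,f)$ is transitive, positively expansive and has the shadowing property. Periodic points are dense in $\Omega(f)$ --- a periodic $\delta$-pseudo-orbit near any point is $\eps$-traced by a point $p$ for which $f^k(p)$ again $\eps$-traces it, whence $f^k(p)=p$ by uniqueness of $(\beta/2)$-tracing points --- so fix a periodic $p\in B$. The lengths of the $\eps$-chains from $p$ to $p$ form an additively closed set whose greatest common divisor $a_\eps$ divides the period of $p$ and is non-increasing in $\eps$, hence stabilizes to some $a$ as $\eps\downarrow 0$. Sorting the points reachable from $p$ by arbitrarily short chains according to the residue modulo $a$ of the chain length yields closed sets $C_0,\dots,C_{a-1}$ that are pairwise disjoint, cover $B$, satisfy $f(C_i)=C_{i+1\bmod a}$, and --- by the same positive-expansivity input as in part~(1) --- are relatively open in $B$, hence clopen in $\Omega(f)$. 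By construction $(C_i,f^a)$ is chain mixing, and since $f^a$ inherits the shadowing property and $C_i$ is clopen, ``chain mixing $+$ shadowing $\Rightarrow$ strongly mixing'' (the modulus $M$ from chain mixing feeds the same tracing estimate as above, now for every $n\geq M$), so each $(C_i,f^a)$ is strongly mixing. These $C_i$ are the elementary sets.

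The step I expect to be the main obstacle is the clopenness of the equivalence classes in part~(1) and of the sets $C_i$ in part~(2): this is the one place that genuinely requires positive expansivity --- in an arbitrary shadowing system the chain components need not be open --- and it is precisely the local-product-structure phenomenon for positively expansive maps with shadowing established in~\cite{AH94}. Everything else is routine bookkeeping with $\eps$-chains together with the two shadowing upgrades above.
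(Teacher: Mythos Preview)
The paper does not give its own proof of this statement: it is quoted verbatim as a known result, with the citation ``see \cite[Theorem~3.4.4.]{AH94}'' immediately preceding the theorem, and no argument follows. So there is no in-paper proof to compare your proposal against.

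That said, your outline is precisely the standard route taken in \cite{AH94}: identify $\Omega(f)$ with the chain-recurrent set via shadowing, decompose it into chain-equivalence classes, use positive expansivity to make the classes clopen (hence finitely many), and then upgrade chain transitivity/chain mixing to genuine transitivity/strong mixing via shadowing. Your identification of the clopenness step as the only place positive expansivity is essential is exactly right, and your two ``shadowing upgrades'' (chain transitive $\Rightarrow$ transitive, chain mixing $\Rightarrow$ strongly mixing) are standard and correct. The density-of-periodic-points argument you sketch --- tracing a periodic pseudo-orbit and invoking uniqueness of $(\beta/2)$-tracing from Remark~\ref{rem:positive-expansive-conjugacy} --- is also the usual one. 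One minor point: you should check that $f|_{\Omega(f)}$ is surjective (needed for the $f(C_i)=C_{i+1\bmod a}$ claim); this follows since $\Omega(f)$ is closed, $f$-invariant, and contains all $\omega$-limit sets, but strictly speaking it deserves a line. Otherwise your sketch matches the source the paper defers to.
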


\begin{thm}\label{thm:shadowing-expansive}
Let $(X,f)$ be a non-wandering system with the shadowing property, where $X$ has no isolated points.
If $(X,f)$ is positively expansive, then for every $d\geq 1$ and every non-empty open subset $U$ of $X$,
there exists a positive integer $m\in\mathbb{N}$ and a subsystem $(Y,f^m)$ with $Y\subset U$ and
a conjugacy map $\pi:(Y,f^m)\to (\{0,1,\dotsc,d\}^{\N},\sigma)$.
\end{thm}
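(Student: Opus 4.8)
The plan is to reduce the statement, via the Topological Decomposition Theorem, to a single application of Theorem~\ref{prop:weak-mixing} followed by Remark~\ref{rem:positive-expansive-conjugacy}. First I would note that $f$ is surjective: since $X=\Omega(f)$, for every $x\in X$ and every neighborhood $V$ of $x$ there is $k\geq 1$ with $f^k(V)\cap V\neq\emptyset$, and picking $y\in V$ with $f^k(y)\in V$ shows $V\cap f(X)\neq\emptyset$; hence $X=\overline{f(X)}=f(X)$ by compactness. So the Topological Decomposition Theorem applies to $(X,f)$ and produces basic sets $B_1,\dots,B_l$ with $X=\bigcup_{i=1}^l B_i$, and for each $i$ an integer $a_i>0$ and elementary sets $C^{(i)}_0,\dots,C^{(i)}_{a_i-1}$ with $B_i=\bigcup_k C^{(i)}_k$ and each $(C^{(i)}_k,f^{a_i})$ strongly mixing. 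Being one of finitely many pairwise disjoint closed sets covering $X$, each $B_i$ is clopen in $X$; likewise each $C^{(i)}_k$ is clopen in $B_i$, hence clopen in $X$.

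Next I would locate a good elementary set. Since $X$ has no isolated points, every non-empty open subset of $X$ is infinite, and as the finitely many elementary sets cover $X$, there is an elementary set $C$, with associated period $a$, such that $U\cap C$ is infinite; in particular $C$ is infinite, hence non-trivial. I then would verify that $(C,f^a)$ satisfies the hypotheses of Theorem~\ref{prop:weak-mixing}: it is non-wandering, because strong mixing of $f^a|_C$ gives $\Omega(f^a|_C)=C$; it has the shadowing property, since $f^a$ inherits shadowing from $f$ and shadowing passes to the clopen $f^a$-invariant subset $C$ (for $\eps<\dist(C,X\setminus C)$, any $\eps$-tracing point of a pseudo-orbit contained in $C$ must itself lie in $C$); it is positively expansive with the same expansive constant; and it is weakly mixing, strong mixing being stronger.

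Finally I would apply condition~\eqref{equiv:wmpte:c2} of Theorem~\ref{prop:weak-mixing} to the system $(C,f^a)$, with the given $d$ and the choice $U_0=U_1=\dots=U_d=U\cap C$, a non-empty open subset of $C$. This yields an integer $m'\in\mathbb{N}$, a subsystem $(Y,(f^a)^{m'})$ of $(C,(f^a)^{m'})$, and a factor map $\pi\colon(Y,f^{am'})\to(\{0,1,\dots,d\}^{\N},\sigma)$ with $\pi^{-1}(C[i])\subset U\cap C$ for $i=0,1,\dots,d$. Since $Y=\bigcup_{i=0}^d\pi^{-1}(C[i])$ we obtain $Y\subset U$, and $(Y,f^{am'})$ is a subsystem of $(X,f^{am'})$. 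Because $(C,f^a)$ is positively expansive, Remark~\ref{rem:positive-expansive-conjugacy}, applied to the tracing construction used in the proof of that implication, shows that $\pi$ is in fact a conjugacy. Setting $m=am'$ completes the argument.

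I expect the only genuine subtlety to be in the middle step: making sure that the system $(C,f^a)$ handed to Theorem~\ref{prop:weak-mixing} really meets every one of its hypotheses — in particular that the shadowing property survives both passing to the power $f^a$ and restricting to an elementary set — and that Remark~\ref{rem:positive-expansive-conjugacy} is being invoked for precisely the construction carried out in the proof of implication~\eqref{equiv:wmpte:c2}. Everything else is routine bookkeeping with the decomposition and with the identities $(f^a)^{m'}=f^{am'}$ and $m=am'$.
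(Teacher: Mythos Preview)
Your proposal is correct and follows essentially the same route as the paper: reduce via the Topological Decomposition Theorem to a strongly mixing elementary set $(C,f^a)$ intersecting $U$, check that this subsystem inherits positive expansiveness and the shadowing property, and then apply Theorem~\ref{prop:weak-mixing} together with Remark~\ref{rem:positive-expansive-conjugacy}. You supply several details the paper leaves implicit---surjectivity of $f$, clopenness of elementary sets, and the mechanism by which shadowing passes to $(C,f^a)$---but the overall architecture is the same.
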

\begin{proof}
Let $U$ be a non-empty open subset of $X$.
By the Topological Decomposition Theorem, there is an elementary set $C$ such that $U\cap C\neq\emptyset$.
Since $X$ has no isolated points, so does $C$.
There exists  $a>0$ such that $(C,f^a)$ is strongly mixing.
It is easy to see that $(C,f^a)$ is also positively expansive and has the shadowing property.
Then the result follows from Theorem~\ref{prop:weak-mixing} and Remark~\ref{rem:positive-expansive-conjugacy}.
\end{proof}

\begin{rem}
In Theorem~\ref{thm:shadowing-expansive}, we assume that $X$ has no isolated points.
If fact, for every non-wandering dynamical system $(X,f)$,
we easily obtain that $X$ has at most countably many isolated points
and every isolated point in $X$ must be periodic.
\end{rem}

\section{Specification property}

Let $(X,f)$ be a dynamical system. We say that $f$ satisfies the \emph{periodic specification property}
if for any $\eps >0$ there exists $M>0$ such that
for any $k \geq 2$, any $k$ points $x_1, x_2, \ldots , x_k \in X$, any non-negative integers $0\leq a_1 \leq b_1 < a_2 \leq b_2 < \ldots < a_k \leq b_k$
with $a_i - b_{i-1} \geq M$ for each $i = 2, 3, \ldots , k$ and any integer $p\geq  M + b_k - a_1$,
there exists a periodic point $z\in X$ with $f^p(z)=z$ and
$d(f^{j}(z), f^j(x_i )) < \eps$ for all $a_i\leq  j \leq b_i$ and $1 \leq i \leq k$.
We say that $f$ satisfies the \emph{specification property} if
the point $z$ in the periodic specification property is not requested to be periodic (hence no condition on $p$).

It was proved first by Bowen (see \cite{Denker}) that
if $(X,f)$ is a weakly mixing system with the shadowing property,
then it satisfies the specification property.
Moreover, if $(X,f)$ is also positively expansive,
then it satisfies the periodic specification property.
It is shown in~\cite{MO} that every non-wandering system with the shadowing property
has a dense set of regularly recurrent points.
We combine these above results and show that every weak mixing system with the shadowing property
also has the following version of specification property.
The proof is inspired by Lemma~3.1 in \cite{MO}.

\begin{thm}\label{thm:spec}
Let $(X,f)$ be a weakly mixing system with the shadowing property. For any $\eps >0$ there exists $M>0$ such that
for any $k \geq 2$, any $k$ points $x_1, x_2, \ldots , x_k \in X$,
 any non-negative integers $0\leq a_1 \leq b_1 < a_2 \leq b_2 < \ldots < a_k \leq b_k$
with $a_i - b_{i-1} \geq M$ for each $i = 2, 3, \ldots , k$ and  any $p \geq M + b_k-a_1$, there exists $z \in RR(f)$
such that $d(f^{j}(z),f^{np+j}(z)) < \eps$ for every $n,j\geq 0$ and
$d(f^{np+j}(z), f^j(x_i )) < \eps$ for all $a_i\leq  j \leq b_i$, $1 \leq i \leq k$ and $n\geq 0$.
\end{thm}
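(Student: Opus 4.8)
\section*{Proof proposal}

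The plan is to construct $z$ as the limit of a sequence of points produced by repeated use of the shadowing property, along the lines of Lemma~3.1 in~\cite{MO}. Since $(X,f)$ is weakly mixing it is chain mixing, and by Bowen's theorem it has the specification property; what I actually need from this is the ability to join any two points by $\delta$-chains of any prescribed length $\ge M_\delta$. So I would first fix $\eps>0$, choose a strictly decreasing sequence $\gamma_l\downarrow 0$ with $\sum_{l\ge 0}\gamma_l<\eps/2$, let $\delta_l$ be a shadowing constant for $\gamma_l$ (decreasing, with $\delta_l<\gamma_l$), and put $M$ equal to the constant furnished by chain mixing for precision $\delta_0$. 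Given the data $x_1,\dots,x_k$, the windows $[a_i,b_i]$ with all gaps $\ge M$, and $p\ge M+b_k-a_1$, I would assemble a $\delta_0$-pseudo orbit $\xi^{(0)}$ of period $p$ by placing the honest orbit piece $f^{a_i}(x_i),\dots,f^{b_i}(x_i)$ on each window $[a_i,b_i]\pmod p$ and a $\delta_0$-chain of the appropriate length on each of the gaps between windows, including the wrap-around gap from $f^{b_k}(x_k)$ back to $f^{a_1}(x_1)$, whose length $(a_1+p)-b_k\ge M$ is precisely what the hypothesis $p\ge M+b_k-a_1$ supplies. Shadowing $\xi^{(0)}$ gives a point $z_0$ that $\gamma_0$-traces it; in particular $z_0$ already $\eps/2$-periodically traces the prescribed pattern with period~$p$.

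Next I would iterate. Given $z_l$ tracing a period-$p_l$ pseudo orbit $\xi^{(l)}$, with $p\mid p_0\mid p_1\mid\cdots$ and $p_l\to\infty$ rapidly, I would run a long stretch of the honest orbit of $z_l$ (of length a large multiple of $p_l$) and then close it up with a short $\delta_{l+1}$-chain back to $z_l$, producing a period-$p_{l+1}$ $\delta_{l+1}$-pseudo orbit $\xi^{(l+1)}$ which, off its short ``closing'' block, differs from the periodic extension of $\xi^{(l)}$ by less than $\gamma_l$ (this last estimate being exactly what tracing $\xi^{(l)}$ by $z_l$ gives). Shadowing yields $z_{l+1}$ with $d(z_{l+1},z_l)<\gamma_{l+1}$, and I would set $z=\lim_l z_l$. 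Since $z$ approximately traces $\xi^{(l)}$ and $\xi^{(l)}$ has period $p_l$, the return distances $d(f^{p_l n}(z),z)$ become arbitrarily small as $l$ grows, so $z\in RR(f)$. Because every $p_l$ is a multiple of $p$ and the total perturbation accrued along the construction is $<\eps/2$, the point $z$ still $\eps/2$-periodically traces the original pattern with period $p$; this yields the first displayed inequality, and the second follows from it together with the block-$0$ estimates $d(f^j(z),f^j(x_i))<\eps/2$ for $a_i\le j\le b_i$, which persist because $j\le b_k$ ranges over a bounded set and the maps $f,\dots,f^{b_k}$ have a common modulus of continuity.

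The technical core, and where I expect the main difficulty, is the tension between the three requirements on $z$: regular recurrence forces the tracing precisions $\gamma_l\to 0$ and hence the closing blocks to grow in length, whereas the fixed period $p$ and the exact window data $f^j(x_i)$ are rigid. One must nest the periods so that $p$ divides every $p_l$, take each ratio $p_{l+1}/p_l$ large enough that every fixed position eventually lies outside all closing blocks, and arrange the construction so that the window coordinates of $\xi^{(l)}$, and therefore of $z$, stay within the running budget $\sum_l\gamma_l<\eps/2$ of the prescribed values $f^j(x_i)$ at \emph{all} times $np+j$, not merely for small $n$ — this is where placement of the closing blocks relative to the period-$p$ skeleton matters. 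Checking that the limit configuration is genuinely traced by the single point $z$ (equivalently, that it is a legitimate pseudo orbit), that $z$ indeed lies in $RR(f)$, and that all the error estimates close up, is precisely the bookkeeping inherited from~\cite[Lemma~3.1]{MO}, to which I would appeal for the details.
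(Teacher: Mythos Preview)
Your overall architecture matches the paper's: start from a $\delta$-periodic pseudo-orbit of period $p$ built out of the orbit segments of the $x_i$, and then refine it through a sequence of approximations with increasing periods $m_r$ dividing one another, taking the limit to obtain a regularly recurrent $z$. The divergence is in the refinement step, and there it is not just a bookkeeping issue.

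You close each level with a $\delta_{l+1}$-chain supplied by chain mixing. The length of such a chain is at least the chain-mixing constant $M_{\delta_{l+1}}$, and since $\delta_{l+1}\to 0$ this length tends to infinity; in particular, from some stage on it exceeds $p$. Once the closing block is longer than $p$, no placement can keep it disjoint from the windows $[a_i,b_i]\pmod p$: for every window there exist $n$ with $np+j$ landing inside the closing block for all $a_i\le j\le b_i$. At those positions $\xi^{(l+1)}$ is an arbitrary chain value, so $f^{np+j}(z_{l+1})$ is near an arbitrary point rather than near $f^j(x_i)$. Because $\xi^{(l+2)}$ is built from the honest orbit of $z_{l+1}$, this error is copied forward verbatim to all later levels and hence to $z$; the running budget $\sum_l\gamma_l$ never absorbs it. So the second inequality fails for those $n$, and the first (uniform $p$-almost-periodicity of $z$) fails for the same reason. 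Your remark that ``placement \dots\ matters'' is thus not a difficulty to be managed but an obstruction: the scheme with chains cannot produce tracing at \emph{all} multiples of $p$.

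The paper sidesteps this by never using a chain to close. At each stage it replaces the shadowing point $q_s$ by a \emph{minimal} point $y_s\in\omega(q_s,f^{m_s})$. Minimality (for $f^{m_s}$) gives, for any $\alpha>0$, a genuine return $d(f^{m_{s+1}}(y_s),y_s)<\alpha$ with $m_s\mid m_{s+1}$; the next periodic pseudo-orbit is then simply the $m_{s+1}$-periodic repetition of the honest orbit segment $y_s,f(y_s),\dots,f^{m_{s+1}-1}(y_s)$, with a single jump of size $<\alpha$ per period and no closing block at all. Moreover, passing to a point of $\omega(q_s,f^{m_s})$ rather than $q_s$ itself is what makes the orbit of $y_s$ \emph{uniformly} $m_s$-almost-periodic (condition~(2) in the paper), and this is exactly what lets one compare $f^j(y_{s+1})$ with $f^j(y_s)$ at \emph{every} time $j$, not just on a bounded initial segment. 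Your appeal to \cite[Lemma~3.1]{MO} would in fact invoke this minimal-point mechanism, which is a different argument from the chain-based sketch you wrote down; to repair the proof you should replace the chain closings by the minimal-point step.
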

\begin{proof}
First note that we may assume that $a_1=0$. Simply, if $z'$ is a point obtained for $a_i'=a_i-a_1$, $b_i'=b_i-a_1$
and $x_i'=f^{a_1}(x_i)$ then $z=f^{p-a_1}(z')$ is a desired point.

Let $\lambda_0=\eps/8$ and let $\delta>0$ be such that every $\delta$-pseudo-orbit is $\lambda_{0}/8$-traced.
Since $f$ is weakly mixing, by results of \cite{RW} it is chain mixing, that is,
there is $M>0$ such that for any two points $x,y\in X$ and any $n\geq M$ there is a $\delta$-chain
of length $n$ from $x$ to $y$.

Now fix $k$, $p$ and sequences $x_i$, $a_i$, $b_i$ as in the statement of theorem (with $M$ fixed above).
There is a periodic $\delta$-pseudo-orbit
$$
\xi = (z_0, \ldots, z_{p-1}, z_0, \ldots z_{p-1}, z_0, \ldots)
$$
such that $z_{j}=f^j(x_i)$ for $i=1,\ldots,k$ and all $a_i\leq  j \leq b_i$.
Let $q_{0}$ be a point which is $\lambda_0/8$-tracing $\xi$.
Let $y_{0}$ be any minimal point in the set $\omega(q_{0},f^{p})$.
For any $i\geq 0$ there is $\tau>0$ such that $d(f^{i}(y_{0}),f^{\tau p+i}(q_{0}))<\lambda_0/{8}$.
Then
\begin{eqnarray*}
d(f^i(y_0),\xi_i)&\leq& d(f^{i}(y_{0}),f^{\tau p+i}(q_{0}))+d(\xi_i,f^{\tau p+i}(q_{0}))
< \lambda_0/{8}+\lambda_0/{8}\\
&\leq& \lambda_0/4,
\end{eqnarray*}
which implies that $y_0$ is $\lambda_0/4$-tracing $\xi$.

Denote $\lambda_r=\eps/8^{r+1}$ for $r=0,1,2,\ldots$ and $m_0=p$.
Now we will apply arguments inspired by the proof of Lemma~3.1 in \cite{MO},
and construct an increasing sequence $\set{m_r}_{r=0}^\infty$ of natural numbers and
a sequence $\set{y_r}_{r=0}^\infty$ of points of $X$ such that for every $r=0,1,2,\ldots$ we have
\begin{enumerate}
\item\label{spec:rr:c1} $m_{r}$ divides $m_{r+1}$;
\item\label{spec:rr:c2} $y_r$ is a minimal point and $d(f^{i m_r+j}(y_r), f^j(y_r))<\frac{\lambda_{r+1}}{2}$
for any $i,j\geq 0$;
\item\label{spec:rr:c3} $d(f^{j}(y_r),f^{j}(y_{l-1})) \leq \sum_{i=l}^r \lambda _i$ for $1\le l\le r$ and every $j \geq 0$.
\end{enumerate}

Before we proceed with the construction, let us first assume that
we have already constructed a sequence of points  $\{y_r\}$ satisfying \eqref{spec:rr:c1}--\eqref{spec:rr:c3}.
Let $z$ be a limit point of the sequence $\{y_r\}$.
For every $j\geq 0$ there is $s>0$ such that $d(f^j(z),f^j(y_{i_s}))<\eps/8$ and
then (applying condition \eqref{spec:rr:c3})
we obtain that
\begin{eqnarray*}
d(f^j(z),\xi_j)&\leq &d(\xi_j,f^j(y_0))+d(f^j(y_0), f^j(y_{i_s}))+d(f^j(y_{i_s}),f^j(z))\\
&\leq & \frac{\eps}{8}+\sum_{i=1}^{i_s} \lambda_i+\frac{\eps}{8}\leq \frac{\eps}{4}+\sum_{i=1}^\infty \lambda_i < \frac{\eps}{2}.
\end{eqnarray*}
In particular, for any $n\geq 0$ we have
\begin{eqnarray*}
d(f^j(z),f^{np+j}(z))&\leq &d(f^j(z),\xi_j)+d(\xi_j,f^{np +j}(z))\\
&=&d(f^j(z),\xi_j)+d(\xi_{np +j},f^{np+j}(z))\\
&<&\eps.
\end{eqnarray*}

Similarly, for every $r\geq 1$ and every $j\geq 0$
there is $s>r$ such that $d(f^{jm_r}(z),f^{jm_r}(y_{i_s}))<\lambda_r$ and $d(z,y_{i_s})<\lambda_r$.
Clearly $i_s\geq s$.
Additionally, observe that \eqref{spec:rr:c3} implies
\begin{eqnarray*}
d(y_{i_s},y_{r})) &\leq& \sum_{i=r+1}^{i_s} \lambda _i,\\
d(f^{jm_r}(y_{i_s}),f^{jm_r}(y_{r}))) &\leq& \sum_{i=r+1}^{i_s} \lambda _i
\end{eqnarray*}
and by \eqref{spec:rr:c2} we also have that
$d(f^{jm_r}(y_{r}),y_r)<\frac{\lambda_{r+1}}{2}$
which gives the following
\begin{eqnarray*}
d(f^{jm_r}(z),z)&\leq & d(f^{jm_r}(z),f^{jm_r}(y_{i_s}))+d(f^{jm_r}(y_{i_s}),f^{jm_r}(y_{r}))\\
&&\quad+d(f^{jm_r}(y_{r}),y_{r})+d(y_r, y_{i_s})+d(y_{i_s},z)\\
&\leq& \lambda_r +2\sum_{i={r+1}}^{i_s} \lambda _i+\frac{\lambda_{r+1}}{2}+\lambda_r \leq 3\sum_{i=r}^\infty \lambda_i.
\end{eqnarray*}
But $\lim_{r\to\infty}\sum_{i=r}^\infty \lambda_i=0$, hence $z\in RR(f)$.

To complete the proof, we need to perform a construction of the sequences $\{m_r\}$ and $\{y_r\}$ satisfying \eqref{spec:rr:c1}--\eqref{spec:rr:c3}.
Fix $s\geq 0$ and suppose that $m_r,y_r$ are constructed for every $0\leq r \leq s$.
We will show, how $m_{s+1}$ and $y_{s+1}$ can be constructed.
Let $\alpha>0$ be such that every $\alpha$-pseudo-orbit is $\lambda_{s+2}/8$-traced.
Since $y_s$ is minimal for $f$, it is also minimal for $f^{m_s}$, in particular, there is
a positive integer $m_{s+1}$  divisible by $m_s$
such that $d(f^{m_{s+1}}(y_s),y_s)<\alpha$. Take a periodic $\alpha$-pseudo-orbit
$$
\gamma=(y_s,f(y_s),\ldots, f^{m_{s+1}-1}(y_s),y_s,f(y_s),\ldots, f^{m_{s+1}-1}(y_s),y_s,\ldots)
$$
and let $q_{s+1}$ be a point which is $\lambda_{s+2}/8$-tracing $\gamma$.
Let $y_{s+1}$ be any minimal point in the set $\omega(q_{s+1},f^{m_{s+1}})$.
If we fix any $i,j\geq 0$ then there is $\tau>0$
such that $d(f^{i m_{s+1}+j}(y_{s+1}),f^{(i+\tau) m_{s+1}+j}(q_{s+1}))<\lambda_{s+2}/8$
and $d(f^{j}(y_{s+1}),f^{\tau m_{s+1}+j}(q_{s+1}))<\lambda_{s+2}/8$.
But then
\begin{eqnarray*}
d(f^{i m_{s+1}+j}(y_{s+1}), f^j(y_{s+1}))&<& d(f^{i m_{s+1}+j}(y_{s+1}),\gamma_{\tau m_{s+1}+j})+d(\gamma_{\tau m_{s+1}+j}, f^j(y_{s+1}))\\
&= & d(f^{i m_{s+1}+j}(y_{s+1}),\gamma_{(i+\tau)m_{s+1}+j})+d(\gamma_{\tau m_{s+1}+j}, f^j(y_{s+1}))\\
&<& \frac{\lambda_{s+2}}{8}+\frac{\lambda_{s+2}}{8}+\frac{\lambda_{s+2}}{8}+\frac{\lambda_{s+2}}{8}= \frac{\lambda_{s+2}}{2},
\end{eqnarray*}
which proves \eqref{spec:rr:c2}. Similarly, fix any $1\le l\le s+1$ and any $j \geq 0$,
there is $\tau>0$
such that $d(f^{j}(y_{s+1}),f^{\tau m_{s+1}+j}(q_{s+1}))<\lambda_{s+2}/8$.
There is $0\leq t<m_{s+1}$ and $i\geq 0$ such that $j=i m_{s+1}+t$ and then
\begin{eqnarray*}
d(f^{j}(y_{s+1}),f^{j}(y_{l-1})) &\leq& d(f^{j}(y_{s+1}),\gamma_{\tau m_{s+1}+j}) + d(\gamma_{\tau m_{s+1}+j}, f^{j}(y_{l-1}))\\
&\leq & d(f^{j}(y_{s+1}),\gamma_{\tau m_{s+1}+j}) + d(\gamma_t, f^j(y_s)) + d(f^j(y_s), f^{j}(y_{l-1}))\\
&\leq& \frac{\lambda_{s+2}}{8}+\frac{\lambda_{s+2}}{8} + d(f^t(y_s), f^{im_{s+1}+t}(y_s))+\sum_{i=l}^s \lambda _i\\
&\leq& \frac{\lambda_{s+2}}{4} + \frac{\lambda_{s+1}}{2} +\sum_{i=l}^s \lambda _i<\sum_{i=l}^{s+1} \lambda _i,
\end{eqnarray*}
which proves \eqref{spec:rr:c3} and ends the proof.
\end{proof}

As an easy consequence of Theorem~\ref{thm:spec} we obtain a classical result, proved first by Bowen (see \cite{Denker}).

\begin{cor}\label{cor:expansitive-psp}
If $(X,f)$ is weakly mixing, positively expansive and has the shadowing property,
then it has the periodic specification property.
\end{cor}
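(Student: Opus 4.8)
The plan is to deduce Corollary~\ref{cor:expansitive-psp} directly from Theorem~\ref{thm:spec}, using positive expansiveness only to upgrade the regularly recurrent tracing point to a genuine periodic point. First I would fix $\eps>0$ and apply Theorem~\ref{thm:spec} with parameter $\eps' = \min\{\eps,\beta/2\}$, where $\beta$ is an expansive constant for $(X,f)$; this yields $M>0$ with the corresponding tracing statement. So given $k\geq 2$, points $x_1,\dots,x_k$, gaps $0\leq a_1\leq b_1<\dots<a_k\leq b_k$ with $a_i-b_{i-1}\geq M$, and $p\geq M+b_k-a_1$, Theorem~\ref{thm:spec} produces $z\in RR(f)$ with $d(f^j(z),f^{np+j}(z))<\eps'\leq\beta/2$ for all $n,j\geq 0$ and $d(f^{np+j}(z),f^j(x_i))<\eps'\leq\eps$ on the prescribed windows.

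The key observation is that the condition $d(f^j(z),f^{np+j}(z))<\beta/2$ for all $j,n\geq 0$ forces $f^p(z)=z$. Indeed, apply this with $n=1$: for every $j\geq 0$ we have $d(f^j(z),f^j(f^p(z)))<\beta/2<\beta$, which is precisely the negation of the expansiveness condition applied to the pair $z$ and $f^p(z)$; hence $f^p(z)=z$ and $z$ is periodic (with period dividing $p$). Once $z$ is periodic, the windowed estimate $d(f^{np+j}(z),f^j(x_i))<\eps$ collapses (taking $n=0$) to $d(f^j(z),f^j(x_i))<\eps$ for all $a_i\leq j\leq b_i$, $1\leq i\leq k$. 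Thus $z$ is exactly the periodic tracing point required by the definition of the periodic specification property, and the same $M$ works for the chosen $\eps$.

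The only point requiring a moment's care is whether positive expansiveness is inherited in the form I need: I am using it verbatim as stated in the definition preceding Remark~\ref{rem:positive-expansive-conjugacy}, applied to the two points $z$ and $f^p(z)$ of $X$, so no passage to subsystems or powers is involved and no extra argument is needed. I would also remark that this recovers Bowen's classical theorem, as announced before the statement. There is no serious obstacle here — the substance is entirely in Theorem~\ref{thm:spec}, and the corollary is a one-line rigidity argument; the main thing to get right is choosing $\eps'\leq\beta/2$ at the outset so that the expansiveness dichotomy can be triggered.
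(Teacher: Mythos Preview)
Your proposal is correct and follows essentially the same route as the paper: apply Theorem~\ref{thm:spec} with a tracing accuracy below the expansive constant, then use positive expansiveness on the pair $z,f^p(z)$ to force $f^p(z)=z$. The only difference is cosmetic---the paper simply assumes $\eps<\beta$ at the outset (which suffices since specification for small $\eps$ implies it for larger $\eps$), while you explicitly pass to $\eps'=\min\{\eps,\beta/2\}$; both are fine.
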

\begin{proof}
Fix $\eps<\beta$ where $\beta$ is an expansive constant. The only
thing which we need to extend in Theorem~\ref{thm:spec}
to obtain periodic specification property is that $z$ is a periodic point with period $p$.
But if $\eps<\beta$ then in Theorem~\ref{thm:spec}
for every $j\geq 0$ we have
$$
d(f^j(z),f^{j+p}(z))\leq \eps<\beta.
$$
Hence $z=f^p(z)$ completing the proof.
\end{proof}

Given a compact metric space $X$, denote by $C(X,\R)$ the set of all continuous functions $\xi \colon X\to \R$.
If we endow it with the norm $\Vert\xi\Vert=\sup_{x\in X}|\xi(x)|$ then it becomes a Banach space.

Let $\mathcal{M}(X)$ be the set of all Borel probability measures on $X$ and
fix a dense sequence $\set{\xi_i}_{i=1}^\infty\subset C(X,\R)$.
Then the metric
$$
\mathcal{D}(\mu,\nu)=\sum_{i=1}^\infty\frac{|\int_X \xi_i d\mu-\int_X \xi_i d\nu|}{2^i (\Vert\xi_i\Vert+1)}
$$
for $\mu,\nu \in \mathcal{M}(X)$ is compatible with the weak-$^*$ topology on $\mathcal{M}(X)$.
We denote by $\mathcal{M}_f(X)\subset \mathcal{M}(X)$ the set of all invariant measures for $(X,f)$.
The \emph{support} of a measure $\mu\in M(X)$, denoted by $\supp(\mu)$,
is the smallest closed subset $C$ of $X$ such that $\mu(C)=1$.
If $\mu$ is an invariant measure for $(X,f)$, then it is clear that $\supp(\mu)$ is $f$-invariant.

Let $x\in X$ be a periodic point with periodic $p$. Then it corresponds to an invariant measure $\mu_x$
which has mass $1/p$ as each of the points $x, f(x), \dotsc,f^{p-1}(x)$.
We denote the set of these measures by $P(p)$. The following fact was first proved by Sigmund (see \cite{S70}).

\begin{thm}\label{thm:sieg}
If $(X,f)$ satisfies the periodic specification property and if $\ell\in\mathbb{N}$,
then $\bigcup_{p\geq\ell}P(p)$ is dense in $\mathcal{M}_f(X)$.
\end{thm}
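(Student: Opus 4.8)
The plan is to combine the ergodic decomposition with the periodic specification property in the standard way: reduce to approximating finite convex combinations of ergodic measures, realize each ergodic measure as the empirical distribution of a long generic orbit segment, and then glue these segments into a single periodic orbit whose empirical measure is close to the target.

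First I would reduce the problem to ergodic measures. The set $\mathcal{M}_f(X)$ is a compact convex metrizable subset of $\mathcal{M}(X)$ whose extreme points are exactly the ergodic measures, so by the Krein--Milman theorem finite convex combinations of ergodic measures are dense in $\mathcal{M}_f(X)$. Hence it suffices to approximate, in the metric $\mathcal{D}$, an arbitrary combination $\mu=\sum_{i=1}^{k}\beta_i\mu_i$ with each $\mu_i$ ergodic, $\beta_i>0$ and $\sum_i\beta_i=1$, by a measure from $\bigcup_{p\geq\ell}P(p)$. Fixing $\eps>0$ and recalling that $\mathcal{D}$ is governed by the fixed dense sequence $\set{\xi_i}$, it is enough to make $|\int_X\xi\,d\mu_z-\int_X\xi\,d\mu|$ small for finitely many (uniformly continuous) test functions $\xi$, where $\mu_z$ is a suitable periodic-orbit measure.

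Next I would build the segments and glue them. By the Birkhoff ergodic theorem each $\mu_i$ admits a point $x_i$ whose empirical measures $\frac{1}{n}\sum_{j=0}^{n-1}\delta_{f^j(x_i)}$ converge weak-$*$ to $\mu_i$; choosing $n_i$ large, the uniform measure on the segment $x_i,f(x_i),\ldots,f^{n_i-1}(x_i)$ is within $\eps$ of $\mu_i$ on the relevant test functions. Let $M$ be the specification constant associated to $\eps$. I would then fix large lengths $N_i$ with $N_i/\big(\sum_j N_j\big)$ as close to $\beta_i$ as desired, place the $k$ prescribed orbit segments consecutively with gaps of length $M$, and set the period $p:=\sum_i N_i+kM\geq\ell$. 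The periodic specification property yields a periodic point $z$ with $f^p(z)=z$ that $\eps$-traces each segment on its time window. Writing $\mu_z\in P(p)$ for the uniform measure on the orbit of $z$, the integral $\int_X\xi\,d\mu_z=\frac{1}{p}\sum_{j=0}^{p-1}\xi(f^j(z))$ splits into $k$ tracing windows and $k$ gap windows. On each tracing window the estimate $d(f^j(z),f^j(x_i))<\eps$ and uniform continuity of $\xi$ force the window average to be close to the segment average for $x_i$, hence to $\int_X\xi\,d\mu_i$; summing with weights $N_i/p\approx\beta_i$ recovers $\int_X\xi\,d\mu$ up to a small error, while the gap windows contribute at most $\tfrac{kM}{p}\Vert\xi\Vert$. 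Choosing the $N_i$ large makes $kM/p$ negligible, so $\mathcal{D}(\mu_z,\mu)<\eps$ with $\mu_z\in P(p)$ and $p\geq\ell$.

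The main obstacle is the bookkeeping in the gluing step: one must simultaneously arrange that each $N_i$ is large enough for its empirical measure to approximate $\mu_i$, that the ratios $N_i/p$ track the weights $\beta_i$, and that the cumulative gap length $kM$ is a vanishing fraction of $p$, all while keeping $p\geq\ell$. (A minor point is that periodic specification is stated for $k\geq 2$; when approximating a single ergodic measure one applies it with $x_1=x_2$.) Once these quantitative choices are balanced, transferring orbit-closeness to weak-$*$ closeness is routine via uniform continuity of the finitely many test functions.
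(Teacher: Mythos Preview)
Your argument is correct. The only point worth noting is that the paper does not give its own proof of this theorem: it is simply quoted from Sigmund~\cite{S70}. The paper does, however, reproduce Sigmund's method in the proof of Corollary~\ref{thm:shadowing-ETf}, and there the route differs from yours.

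You reduce via Krein--Milman to a finite convex combination $\sum_i \beta_i\mu_i$ of ergodic measures, pick a generic point for each $\mu_i$, and glue long orbit segments with weights $N_i/p\approx\beta_i$. Sigmund's original argument (and the paper's adaptation of it) bypasses the ergodic decomposition entirely: one works directly with the given $\mu$, uses that $\mu(Q(f))=1$, and partitions the set $Q(f)$ of quasi-regular points into finitely many Borel cells $P_1,\ldots,P_s$ on which each limit function $\xi^*$ has oscillation at most $\eps/4$; a representative $y_j\in P_j$ is chosen from each cell, and the weights are the masses $\mu(P_j)$ (approximated by rationals $m_j/m$). The gluing step is then the same as yours.

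Both approaches are standard and equally rigorous. Your Krein--Milman route is conceptually cleaner (one sees immediately that the problem reduces to ergodic building blocks), at the cost of an extra approximation layer. Sigmund's partition-of-$Q(f)$ route is more self-contained---it never invokes Choquet/Krein--Milman---and produces the representative points and weights in one stroke, which is convenient when one wants to adapt the construction to settings where the tracing point is not periodic (as the paper does in Corollary~\ref{thm:shadowing-ETf}).
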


While there are known examples of weakly mixing systems with the shadowing property that have no periodic points
(e.g. see Example~\ref{exmp:shadowing-non-period}), we can prove that the assertion of Theorem~\ref{thm:sieg}
can be preserved to some extent.
Strictly speaking, we can show that if a weakly mixing system has the shadowing property,
then the set of ergodic invariant measures supported on the closures of orbits of regularly recurrent points
is dense in the set of invariant measures.

\begin{defn}
Denote by $E^T_f(X)$ the set of ergodic measures $\mu\in \mathcal{M}_f(X)$ for $(X,f)$
such that the support of $\mu$ is the closure of the orbit of a regularly recurrent point.
\end{defn}

Denote by $Q(f)$ the set of quasi-regular points with respect to $f$, that is the set of points $x$ such that
the limit
$$
\xi^*(x)=\lim_{n\to \infty}\frac{1}{n}\sum_{j=0}^{n-1}\xi(f^j(x))
$$
exists for every $\xi \in C(X,\R)$.
It can be proved that $Q(f)$ is a Borel set and that $\mu(Q(f))=1$ for every $\mu\in \mathcal{M}_f(X)$ (e.g. see \cite{AH94}).

The proof of the following fact is in main part the same as original
Sigmund's argument in the proof of Theorem~\ref{thm:sieg}. The final argument is made by application
of Theorem~\ref{thm:spec}. Since it could be hard to present rigorous explanation of the proof
without detailed reference to \cite{S70}, we decided to provide a complete proof.
It makes the paper complete (and hence accessible to the reader), and at the same
time fits into approach presented by various authors before,
when proving variants of Sigmund's result (e.g. see \cite{FH}).

\begin{cor}\label{thm:shadowing-ETf}
Let $(X,f)$ be a weakly mixing system with the shadowing property. Then $E^T_f(X)$ is a dense subset of $\mathcal{M}_f(X)$.
\end{cor}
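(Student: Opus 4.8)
The plan is to follow Sigmund's classical density argument (as in the proof of Theorem~\ref{thm:sieg}) but to replace the invocation of the periodic specification property with Theorem~\ref{thm:spec}, which produces a regularly recurrent tracing point in place of a periodic one. First I would reduce the problem to approximating a suitable dense subclass of $\mathcal{M}_f(X)$: since the set of convex combinations of ergodic measures is dense in $\mathcal{M}_f(X)$ (by the ergodic decomposition, or by density of measures of the form $\frac1n\sum_{j=0}^{n-1}\delta_{f^j(x)}$ for $x\in Q(f)$), it suffices to show that any measure of the form $\nu=\sum_{l=1}^{k}t_l\nu_l$ with each $\nu_l$ ergodic, $t_l>0$, $\sum t_l=1$, can be approximated in the metric $\mathcal D$ by a member of $E^T_f(X)$. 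Fix $\eps>0$; choose $N_0$ so that $\sum_{i>N_0}2^{-i}<\eps$, so that controlling the first $N_0$ test functions $\xi_1,\dots,\xi_{N_0}$ to within $\eps$ suffices.

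Next, for each $l$ pick (using quasi-regularity, $\nu_l(Q(f))=1$, together with the pointwise ergodic theorem so that orbit averages converge to $\int\xi_i\,d\nu_l$) a point $x_l\in Q(f)$ and a length $L_l$ so large that the empirical average $\frac1{L_l}\sum_{j=0}^{L_l-1}\xi_i(f^j(x_l))$ is within $\eps$ of $\int\xi_i\,d\nu_l$ for all $i\le N_0$. Let $M$ be the constant supplied by Theorem~\ref{thm:spec} for this $\eps$. Now concatenate the orbit segments of the $x_l$: choose integers $n_1\gg n_2\gg\cdots$ hugely larger than the $L_l$ and $M$ (to make the contribution of the transition blocks negligible) and also choose the relative proportions of the blocks so that block $l$ occupies a fraction close to $t_l$ of the total length; set up non-negative integers $0\le a_1\le b_1<a_2\le b_2<\dots$ with gaps $a_i-b_{i-1}\ge M$ encoding these repeated segments, and let $p$ be the total period, $p\ge M+b_{\text{last}}-a_1$. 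Applying Theorem~\ref{thm:spec} (with the points being the appropriate iterates $f^j(x_l)$ placed at the prescribed coordinates) yields $z\in RR(f)$ that $\eps$-traces this scheme along the windows $[a_i,b_i]$ and moreover satisfies $d(f^j(z),f^{np+j}(z))<\eps$ for all $n,j\ge0$, so that the orbit of $z$ essentially repeats with period $p$.

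The main computational step is then the standard estimate that the empirical measure $\mu_z=\frac1p\sum_{j=0}^{p-1}\delta_{f^j(z)}$ is close to $\nu$ in $\mathcal D$: decompose the sum over $j\in\{0,\dots,p-1\}$ into the tracing windows and the transition blocks; on window $l$ the values $\xi_i(f^j(z))$ are within $\|\xi_i\|$-modulus-of-continuity-of-$\eps$ of $\xi_i$ evaluated on the corresponding segment of $x_l$, whose average is within $\eps$ of $\int\xi_i\,d\nu_l$; the transition blocks contribute at most $O(\sum\|\xi_i\|)$ times their total length fraction, which we made $<\eps$. Summing with weights close to $t_l$ gives $|\int\xi_i\,d\mu_z-\int\xi_i\,d\nu|<C\eps$ for $i\le N_0$, hence $\mathcal D(\mu_z,\nu)<C'\eps$. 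Finally, from the near-periodicity $d(f^j(z),f^{np+j}(z))<\eps$ and standard Cesàro averaging, the actual invariant measure $\mu^*$ obtained as a weak-$^*$ limit point of $\frac1n\sum_{j=0}^{n-1}\delta_{f^j(z)}$ (which exists and is $f$-invariant) is itself within $O(\eps)$ of $\mu_z$, hence within $O(\eps)$ of $\nu$; since $z\in RR(f)$, the support of $\mu^*$ is contained in $\overline{Orb(z,f)}$, and if necessary one passes to an ergodic component $\mu'$ of $\mu^*$ whose support is the orbit closure of some regularly recurrent point in $\overline{Orb(z,f)}$ (using that regularly recurrent points are dense in any non-wandering system with shadowing, and that one can arrange $z$ itself to be generic for $\mu^*$ by the strong conclusion of Theorem~\ref{thm:spec}). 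I expect the bookkeeping of the block lengths and the care needed to guarantee that the final ergodic measure both lies in $E^T_f(X)$ and is still $O(\eps)$-close to $\nu$ to be the delicate part; the topological dynamics input is entirely packaged in Theorem~\ref{thm:spec}.
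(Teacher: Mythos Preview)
Your overall plan matches the paper's: both run Sigmund's approximation argument with Theorem~\ref{thm:spec} substituted for the periodic specification property. The reduction to finite convex combinations of ergodic measures versus the paper's partition of $Q(f)$ by level sets of $\xi^*$ are interchangeable, and the block-building and averaging estimates are essentially the same.

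The gap is in your final paragraph. You take a weak-$^*$ limit $\mu^*$ of the empirical measures of $z$ and then try to ``pass to an ergodic component $\mu'$ whose support is the orbit closure of some regularly recurrent point in $\overline{Orb(z,f)}$,'' invoking density of $RR(f)$ in non-wandering systems with shadowing. That reasoning does not apply: the subsystem $(\overline{Orb(z,f)},f)$ need not itself have the shadowing property, and in any case density of regularly recurrent points would not force the support of an arbitrary ergodic component to equal one of their orbit closures. More seriously, you never explain why an \emph{ergodic component} of $\mu^*$ --- as opposed to $\mu^*$ itself --- remains close to the target $\nu$; closeness of a measure says nothing about closeness of its ergodic components.

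The paper closes both issues with one observation you are missing: since $z\in RR(f)\subset M(f)$, the set $\Lambda=\overline{Orb(z,f)}=\omega(z,f)$ is \emph{minimal}. Hence every invariant measure on $\Lambda$ has full support $\Lambda$, so any ergodic measure on $\Lambda$ automatically lies in $E^T_f(X)$. For closeness, the paper does not pass to an ergodic component of a limit measure; instead it takes an \emph{arbitrary} ergodic measure on $\Lambda$, picks a generic point $q$ for it, shifts $q$ into $\omega(z,f^p)$, and uses the conclusion $d(f^j(z),f^{np+j}(z))<\eps$ of Theorem~\ref{thm:spec} to deduce $d(f^j(q),f^j(z))<\delta$ for all $j\ge 0$. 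Thus every ergodic measure on $\Lambda$ integrates the test functions to within the required tolerance, and the delicate ``bookkeeping'' you anticipate disappears.
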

\begin{proof}
Fix any $\mu\in \mathcal{M}_f(X)$ and its open neighborhood $U$. By the definition of metric $\mathcal{D}$ there is $\eps>0$
and a finite set $F\subset C(X,\R)$, such that $\Vert\xi\Vert\leq 1$ for all $\xi \in F$ and
$$
W=\set{\nu \in \mathcal{M}_f(X) : \left|\int_X \xi d\mu - \int_X \xi d\nu\right|<\eps \text{ for all } \xi \in F}\subset U.
$$
By Birkhoff Ergodic Theorem and the fact that $\mu(Q(f))=1$ we have for each $\xi \in F$
$$
\int_{Q(f)}\xi d\mu=\int_{Q(f)}\xi^* d\mu.
$$
It is clear that $\xi^*|_{Q(f)}$ is Borel and $\sup_{x\in Q(f)}|\xi^*(x)|\leq 1$ for all $\xi\in F$.
Let $\mathcal{P}=\set{P_1,\ldots, P_s}$ be a partition of $Q(f)$ into non-empty Borel sets
such that $\xi^*|_{P_i}$ has oscillation bounded by $\eps/4$ for all $\xi\in F$ and $i=1,\dotsc,s$
(i.e. $\sup_{x\in P_i} \xi^*(x) - \inf_{y\in P_i} \xi^*(y)\leq \eps/4$).

For each $j=1,\ldots,s$, choose a point $y_j\in P_s$
and observe that
$$
\left|\int_{P_j}\xi^* d\mu - \mu(P_j)\xi^*(y_j) \right|\leq \frac{\eps}{4}\mu(P_j)
$$
which immediately implies that
$$
\left|\int_{Q(f)}\xi d\mu - \sum_{j=1}^s\mu(P_j)\xi^*(y_j)\right|\leq \frac{\eps}{4}.
$$

There is $\delta>0$ such that if $d(z_1,z_2)<\delta$ then $|\xi(z_1)-\xi(z_2)|<\eps/8$, provided that $\xi \in F$
and $z_1,z_2\in X$.
Let $M$ be provided by Theorem~\ref{thm:spec} for $\delta/4$.

Directly from the definition of $Q(f)$ we can find $N>0$ such that for any $\xi \in F$ and $j=1,\ldots, s$ we have
$$
\left|\frac{1}{N}\sum_{i=0}^{N-1}\xi(f^i(y_j))-\xi^*(y_j)\right|<\frac{\eps}{8}.
$$
In particular we have
\begin{equation}
\left|\int_{Q(f)}\xi d\mu - \frac{1}{N}\sum_{j=1}^s\sum_{i=0}^{N-1}\mu(P_j)\xi(f^i(y_j))\right|\leq \frac{3\eps}{8}.
\label{eq:approxPj}
\end{equation}

Fix a positive integer $m$ such that $\frac{1}{m}< \frac{\eps}{8s}$.
There exist positive integers $n_1,\ldots, n_s$ such that for $j=1,\ldots,s$ we have
$$
\frac{n_j}{m}\leq \mu(P_j)<\frac{n_j+1}{m}.
$$
Putting $m_j=n_j$ or $m_j=n_j+1$ we can present $\sum_{j=1}^s m_j=m$
and clearly we also have
$$
\left|\mu(P_j)-\frac{m_j}{m}\right|\leq \frac{1}{m}\leq \frac{\eps}{8s}.
$$
Therefore
$$
\left|\frac{1}{N}\sum_{j=1}^s\sum_{i=0}^{N-1}\left(\mu(P_j)-\frac{m_j}{m}\right)\xi(f^i(y_j))\right|\leq \frac{\eps}{8N}\sum_{i=0}^{N-1}|\xi(f^i(y_j))|\leq \frac{\eps}{8}
$$
which combined with \eqref{eq:approxPj} implies that
\begin{equation}
\label{eg:almostCalc}\left|\int_{Q(f)}\xi d\mu-
\frac 1 N\sum_{j=1}^s\sum_{i=0}^{N-1}\frac{m_j}{m}\xi(f^i(y_j))\right|\leq \frac{\eps}{2}.
\end{equation}
Increasing $N$ if necessary, we may assume that $2M/N<\eps/8$.

Since $f$ is weakly mixing, it is onto and hence for every $a>0$ and $j$ there is $x_j$ such that $f^{a}(x_j)=y_j$.
For $r=1,\ldots,m$ we put $a_r=(r-1)(N+M)$, $b_r=a_r+N-1$
and let $x_r$ be such that $f^{a_r}(x_r)=y_j$ where $1\leq j\leq s$ is the largest number
such that $\sum_{i=1}^{j-1}m_j\leq r-1$. Put $p=m(N+M)$. Then
\begin{equation}\label{eg:almostCalc2}
\left|\frac{1}{p}\sum_{r=1}^{m}\sum_{i=a_r}^{b_r}\xi(f^i(x_r))-
\frac 1 N\sum_{j=1}^s\sum_{i=0}^{N-1}\frac{m_j}{m}\xi(f^i(y_j))\right|\leq \frac{M}{N+M}<\frac{\eps}{16}.
\end{equation}

Let $z$ be provided by Theorem~\ref{thm:spec} for $\delta/4$, the sequences $a_r$, $b_r$,
points $x_r$ and $p$.
Put $\Lambda=\omega(z,f)$ and take an ergodic measure $\nu$ on $\Lambda$.
Since $\nu$ is ergodic, there exists a point $q\in \Lambda$
such that
\[\lim_{n\to \infty} \frac{1}{n}\sum_{i=0}^{n-1}\xi (f^i(q))=\int_\Lambda \xi d\nu=\int_{Q(f)} \xi d\nu\]
for each $\xi \in F$.
Taking forward iteration of $q$ if necessary,
we may assume that $q\in \omega(z,f^p)$.
For every $j\geq 0$, there exists $\tau>0$ such that $d(f^{j}p, f^{\tau p+j}(z))<\delta/4$, and then
\begin{eqnarray*}
d(f^j(q),f^j(z))&\leq& d(f^j(q),f^{\tau p+j}(z))+d(f^{\tau p+j}(z),f^j(z))\\
&<& \delta/4+\delta/4=\delta/2.
\end{eqnarray*}
Therefore $d(f^{i}(q), f^i(x_r)) < \delta$ for all $a_r\leq  i \leq b_r$ and $1 \leq r \leq m$,

Choose an integer $T>0$ such that for each $\xi \in F$ we have
$$
\left|\frac{1}{pT}\sum_{i=0}^{pT-1}\xi (f^i(q))-\int_{Q(f)} \xi d\nu\right|<\frac{\eps}{8}.
$$
By the choice of $p$, we have $d(f^{np+i}(q),f^{i}(q))\leq \delta$ for all $i,n\geq 0$,
and hence
\begin{equation}
\left|\frac{1}{p}\sum_{i=0}^{p-1}\xi (f^i(q))-\int_{Q(f)} \xi d\nu\right|\leq\frac{\eps}{8}+
\left|\frac{1}{p}\sum_{i=0}^{p-1}\xi (f^i(q))-\frac{1}{pT}\sum_{i=0}^{pT-1}\xi (f^i(q))\right|< \frac{\eps}{4}.
\end{equation}
By our construction we have
\begin{eqnarray}
\left|\frac{1}{p}\sum_{r=1}^{m}\sum_{i=a_r}^{b_r}(\xi(f^i(q))-\xi(f^i(x_r)))\right|&\leq&\frac{mN\eps}{8p}
\leq \frac{\eps}{8},
\end{eqnarray}
and finally
\begin{equation} \label{eg:almostCalc-final}
\left|\frac{1}{p}\sum_{r=1}^{m}\sum_{i=b_r+1}^{b_r+M}\xi(f^i(q))\right|\leq \frac{Mm}{p}\leq \frac{\eps}{16}.
\end{equation}
Now, combining the above calculations \eqref{eg:almostCalc}--\eqref{eg:almostCalc-final}, we obtain that
\begin{eqnarray*}
\left|\int_X \xi d\mu - \int_X \xi d\nu\right|< \frac{\eps}{2}+\frac{\eps}{16}+\frac{\eps}{8}+\frac{\eps}{16}+\frac{\eps}{4}\leq \eps.
\end{eqnarray*}
This shows that $\nu \in W$ completing the proof.
\end{proof}

We finish our considerations with a few simple examples.
Note that if a dynamical system is positively expansive,
then Theorem~\ref{thm:shadowing-ETf} is a consequence of Theorem~\ref{thm:sieg}
by Corollary~\ref{cor:expansitive-psp}.
The same is also true for other types of expansiveness,
e.g. expansive homeomorphism or $c$-expansive surjections
as defined in~\cite{AH94}. We leave details to the reader.
A large class of systems with the shadowing property which are not
positively expansive can be found within the class of dynamical systems on the unit interval,
e.g. in the family of tent maps (e.g. see \cite{Coven})
In these maps, however, another argument can be used to prove
the periodic specification property (see Buzzi's proof of Blokh's classical result in~\cite{Buzzi}),
so ergodic measures in Theorem~\ref{thm:shadowing-ETf} are in fact supported on these points,
since Theorem~\ref{thm:sieg} works.

To obtain a system which satisfies assumption of Theorem~\ref{thm:shadowing-ETf}
but not of Theorem~\ref{thm:sieg} we need a little more work. To do so, we can perform the following standard construction.

\begin{exmp}\label{exmp:shadowing-non-period}
Let $(X_n,f_n)$ be a strongly mixing system with the shadowing property but without points of period $n$.
For example it can be a subshift of finite type defined by a graph with two distinct cycles of length $n+1$ and $n+2$
starting from the same vertex. Assume that the metric on $(X_n,d_n)$ satisfies $\diam (X_n)\leq 1$.
If we take an infinite Cartesian product
$(X,F)=(\prod_{n=1}^\infty X_n,\prod_{n=1}^\infty f_n)$ with the standard product metric $d(x,y)=\sum_{n=1}^\infty 2^{-n} d_n(x_n,y_n)$ then
it generates a topology compatible with the Thikhonov topology, in particular it is compact.
The map $(X,F)$ is strongly mixing and has the shadowing property as a product of maps with the same properties.
But it cannot have periodic points, because for any $x\in X$ and $n\in\mathbb{N}$
we have $F^n(x)_n=f^n(x_n)\neq x_n$.
\end{exmp}

In the above example the set consisting of regularly recurrent points whose orbits closures form subsystems conjugate to odometers is dense.

\begin{que}\label{que-1}
Can we ensure in Theorem~\ref{thm:spec} that $z\in RR(f)$ is such that $\overline{Orb(z,f)}$ is an odometer (up to conjugacy),
not only its almost 1-1 extension?
\end{que}

Note that if we can prove in Theorem~\ref{thm:spec} that $z$ is not only regularly recurrent
but also equicontinuous (in its orbit closure),
then immediately Question~\ref{que-1} has a positive answer.

\subsection*{Acknowledgement}
The first author was
partially supported by STU Scientific Research Foundation for
Talents (NTF12021), Guangdong Natural Science Foundation (S2013040014084) and  NNSF of China (11171320).
The research of second author was supported by Narodowe Centrum Nauki (National Science Center) in Poland, grant no. DEC-2011/03/B/ST1/00790.

The authors would like to thank Guohua Zhang for his helpful remarks.
The authors express many thanks to the anonymous referee, whose
remarks resulted in substantial improvements of the paper.


\begin{thebibliography}{99}
\bibitem{Akin2} E. Akin, \emph{On chain continuity}, Discrete Contin. Dyn. Syst., \textbf{2} (1996), 111--120.
\bibitem{AAB93} E. Akin, J. Auslander, K. Berg, \emph{When is a transitive map chaotic?}
in: Convergence in Ergodic Theory and Probability, Columbus, OH, 1993,
in: Ohio State Univ. Math. Res. Inst. Publ., vol. 5, de Gruyter, Berlin, 1996, pp. 25--40.
\bibitem{AH94} N. Aoki and K. Hiraide, Topological Theory of Dynamical Systems, North-Holland, 1994.
\bibitem{B92} F.~Blanchard,\emph{Fully positive topological entropy and topological mixing}.
Symbolic dynamics and its applications (New Haven, CT, 1991), 95--105,
Contemp. Math., 135, Amer. Math. Soc., Providence, RI, 1992.
\bibitem{Buzzi} J. Buzzi, \emph{Specificaiton on the interval}, Trans. Amer. Math. Soc, 349, (1997), 2737--2754
\bibitem{Coven} E. M. Coven, I. Kan and J. A. Yorke, \emph{Pseudo-orbit shadowing in the family of tent maps},
Trans. Amer. Math. Soc., 308(1) (1988), 227--241.
\bibitem{Denker} M. Denker, C. Grillenberger and K. Sigmund,
\emph{Ergodic theory on compact spaces. Lecture Notes in Mathematics}, Vol. 527. Springer-Verlag, Berlin-New York, 1976.
\bibitem{D05} T.~Downarowicz, \emph{Survey of odometers and Toeplitz flows}, algebraic and topological dynamics,
vol. 385, pp. 7--37. Contemporary Mathematics-American Mathematics Society, Providence (2005).
\bibitem{FH} F. Hofbauer, \emph{Generic properties of
invariant measures for simple
piecewise monotonic transformations}, Israle J. Math, \textbf{59} (1987), 64--80.
\bibitem{GW93} E. Glasner and B. Weiss, \emph{Sensitive dependence on initial conditions}, Nonlinearity 6
(1993), 1067--1075.
\bibitem{GW97} E. Glasner and B. Weiss, \emph{Kazhdan's property T and the geometry of the collection
of invariant measures}, Geometry and Functional Analysis, 7 (1997), 917-935.
\bibitem{CL} L. Chen and S.H. Li, \emph{Shadowing property for inverse limit spaces},
Proc. Amer. Math. Soc., \textbf{115} (1992), 573--580.
\bibitem{HY06} W.~Huang,  X.~Ye, \emph{A local variational relation and applications}. Israel J. Math. 151 (2006), 237--279.
\bibitem{L11}J. Li, \emph{Chaos and entropy for interval maps}, J. Dyn. Diff. Equat., \textbf{23} (2011), no. 2, 333--352.
\bibitem{LOS78} J. Lindenstrauss, G. Olsen and Y. Sternfeld, \emph{The Poulsen Simplex},
Ann. Inst. Fourier, Grenoble, 28, 1 (1978), 91--114.
\bibitem{M11} T.K.S. Moothathu, \emph{Implications of pseudo-orbit tracing property for continuous maps on
compacta}, Top. Appl., 158 (2011), 2232--2239.
\bibitem{MO} T.K.S. Moothathu and P. Oprocha, \emph{Shadowing entropy and minimal subsystems}, Monatsh. Math, in press,
DOI: 10.1007/s00605-013-0504-3
\bibitem{MY02} J.~Mai,and X.~Ye, \emph{The structure of pointwise recurrent maps having the pseudo orbit tracing property},
Nagoya Math. J. 166, 83--92 (2002)
\bibitem{RW} D. Richeson and J. Wiseman, \emph{Chain recurrence rates and topological entropy},
Topology Appl. \textbf{156} (2008), 251--261.
\bibitem{S70} K. Sigmund, \emph{Generic properties of invariant measures for axiom-A diffeomorphisms},
Inventiones math. 11 (1970), 99--109.
\bibitem{W82} P. Walters, \emph{An introduction to ergodic theory}, Graduate Texts in Mathematics, vol. 79,
Springer-Verlag, New York, 1982.
\bibitem{YZ07} X. Ye, G. Zhang, \emph{Entropy points and applications}, Trans. Amer. Math. Soc. 359 (12) (2007), 6167--6186.
\end{thebibliography}
\end{document}